\newcounter{sarrow}
\newcommand\xrsquigarrow[1]{%
	\stepcounter{sarrow}
	\mathrel{
		\begin{tikzpicture}[decoration=snake]		
			\node[minimum size=1cm] (\thesarrow) {\strut#1};
			\draw[->,decorate] (\thesarrow.south west) -- (\thesarrow.south east);
		\end{tikzpicture}%
	}
}
\theoremstyle{plain}
\newtheorem{theorem}{Theorem}[section]
\newtheorem{lemma}[theorem]{Lemma}
\newtheorem{claim}[theorem]{Claim}
\newtheorem{corollary}[theorem]{Corollary}
\newtheorem{remark}[theorem]{Remark}
\newtheorem*{lemma*}{Lemma}
\theoremstyle{definition}
\newtheorem{definition}[theorem]{Definition}
\newcommand{\eps}{\ensuremath{\varepsilon}}
\newcommand{\Gnp}{\ensuremath{\mathcal G(n,p)}}
\newcommand{\Gknp}{\ensuremath{\mathcal G^{(k)}(n,p)}}
\newcommand{\Gknpp}{\ensuremath{\mathcal G^{(k + 1)}(n,p)}}
\newcommand{\Pkl}{\ensuremath{\mathrm{CP}^k_{\ell}}}
\newcommand{\Hkl}{\ensuremath{\mathrm{H}^k_{\ell}}}
\newcommand{\Kkl}{\ensuremath{\mathrm{P}^k_{\ell}}}
\newcommand{\Jkl}{\ensuremath{\mathrm{B}^k_{\ell}}}
\newcommand{\HJkl}{\ensuremath{\mathrm{BH}^k_{\ell}}}
\newcommand{\Wkl}{\ensuremath{W^k_{\ell}}}
\newcommand{\calE}{\ensuremath{\mathcal E}}
\date{}
\title{\vspace{-0.7cm}Powers of Hamilton cycles in random graphs and tight Hamilton cycles in random hypergraphs}
\author{
Rajko Nenadov$^\star$ 
\and
Nemanja \v Skori\'c\thanks{Institute of Theoretical Computer Science, ETH
Z\"urich, 8092 Z\"urich, Switzerland. \newline Email: {\tt
\{rnenadov|nskoric\}@inf.ethz.ch}.} 
}
\begin{document}
\maketitle

\begin{abstract}
    We show that for every $k \in \mathbb{N}$ there exists $C > 0$ such that if $p^k \ge C \log^8 n / n$ then asymptotically almost surely the random graph $\Gnp$ contains the $k$\textsuperscript{th} power of a Hamilton cycle. This determines the threshold for appearance of the square of a Hamilton cycle up to the logarithmic factor, improving a result of K\"uhn and Osthus. Moreover, our proof provides a randomized quasi-polynomial algorithm for finding such powers of cycles. Using similar ideas, we also give a randomized quasi-polynomial algorithm for finding a tight Hamilton cycle in the random $k$-uniform hypergraph $\Gknp$ for $p \ge C \log^8 n/ n$.
    
    The proofs are based on the absorbing method and follow the strategy of K\"uhn and Osthus, and Allen et al. The new ingredient is a general Connecting Lemma which allows us to connect tuples of vertices using arbitrary structures at a nearly optimal value of $p$. Both the Connecting Lemma and its proof, which is based on Janson's inequality and a greedy embedding strategy, might be of independent interest. 
\end{abstract}

\section{Introduction}

In this paper we consider the binomial random graph model $\Gnp$: given $n \in \mathbb{N}$ and a parameter $p = p(n) \in [0, 1]$, a graph $G \sim \Gnp$ on $n$ vertices is formed by adding each possible edge with probability $p$, independently of all other edges. One of the fundamental problems in the theory of random graphs is to determine for which values of $p$ does $\Gnp$ \emph{asymptotically almost surely}\footnote{We say that an event happens asymptotically almost surely if the probability of the event approaches $1$ as $n$ goes to infinity.} (\emph{a.a.s} for short) contain a desired subgraph $H$? One of the most prominent cases is where $H$ is a Hamilton cycle, for which this question was raised by Erd\H{o}s and R\'enyi \cite{ErRe60} and answered by P\'osa \cite{Po76} and Kor\v sunov \cite{Ko76} (see also \cite{AKS85,Bo84,komlos1983limit} for further developments). Here we study a generalisation to \emph{powers} of Hamilton cycles.

Inspired by the Po\'sa-Seymour conjecture on the existence of the $k$\textsuperscript{th} power of a Hamilton cycle (a \emph{Hamilton $k$-cycle} for short) in graphs with large minimum degree, K\"uhn and Osthus \cite{KuOs12} studied the appearance of Hamilton $k$-cycles in random graphs. Recall that the k$\textsuperscript{th}$ power of a graph $H$ is obtained by adding an edge between every two vertices of $H$ which are at distance at most $k$. 
K\"uhn and Osthus observed that a result of Riordan \cite{Rio00} implies that for (constant) $k \ge 3$ and $p \gg n^{-1/k}$ the random graph $\Gnp$ a.a.s contains a Hamilton $k$-cycle. Moreover, they proved that $p \ge n^{-1/2 + \varepsilon}$ suffices in the case of the square of a Hamilton cycle, for any constant $\varepsilon > 0$ (in this case Riordan's result only gives $p \gg n^{-1/3}$, the same as for $k = 3$). On the other hand, a simple first-moment argument shows that for $p < ((e - \varepsilon)/n)^{1/k}$ the random graph a.a.s does not contain a Hamilton $k$-cycle. In particular, for $k = 2$ there remains a gap of order $n^{\eps}$ in the range of $p$ covered by these results. 

Once we know that $\Gnp$ is likely to contain a certain subgraph, the next question is how fast can we find it? From such an algorithmic point of view Riordan's proof leaves us in complete dark -- it is fully based on the second-moment method and does not give any information on how to find the desired power of a Hamilton cycle. Contrary to that, the result of K\"uhn and Osthus requires somewhat larger value of $p$ but in turn gives a randomized polynomial time algorithm. Our first result is `sandwiched' between the two: the required bound on $p$ is at most a logarithmic factor away from the  optimal one and we obtain an algorithm with \emph{quasi-polynomial} running time.


\begin{theorem} \label{thm:main_graph}
    Let $k \ge 2$ be an integer. There exists $C > 0$ such that if $p^k \ge C \log^8 n / n$ then $\Gnp$ a.a.s contains a Hamilton $k$-cycle. Moreover, there exists a randomized algorithm with $n^{O(\log n)}$ running time which a.a.s finds such a Hamilton $k$-cycle.
\end{theorem}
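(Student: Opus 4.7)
The plan is to prove Theorem~\ref{thm:main_graph} via the \emph{absorbing method}, closely following the framework of K\"uhn and Osthus and of Allen et al., with the general Connecting Lemma advertised in the abstract serving as the main new ingredient that pushes $p$ down to the near-optimal range. First, I would set aside a random \emph{reservoir} $R \subseteq V(G)$ of size roughly $n/\mathrm{polylog}(n)$, whose vertices will be used exclusively to interconnect the various pieces of the final cycle. The goal is then to build (i) a short \emph{absorbing} $k$th power of a path $P_{\mathrm{abs}}$ that can swallow any sufficiently small leftover set of vertices, (ii) a long $k$th power of a path $P_{\mathrm{long}}$ covering almost all vertices outside $P_{\mathrm{abs}} \cup R$, and then (iii) use the Connecting Lemma together with $R$ to join $P_{\mathrm{abs}}$ and $P_{\mathrm{long}}$ into a single $k$th power of a cycle which, after the absorbing step, becomes a Hamilton $k$-cycle on all of $V(G)$.

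For step (i), I would design, for each vertex $v \in V(G)$, a small absorber $A_v$: a $k$th power of a path with two distinguished endpoint $k$-tuples that admits a $k$th-power Hamilton path either including $v$ in the interior or omitting $v$ entirely. A second-moment calculation for the relevant (balanced) subgraph should guarantee that a.a.s.\ each vertex has many disjoint candidate absorbers at density $p^k \ge C \log^8 n / n$, and the Connecting Lemma then stitches one absorber per vertex into a single $k$th power of a path $P_{\mathrm{abs}}$ using short connectors with internal vertices drawn from $R$. For step (ii), I would cover most of $V(G) \setminus (R \cup V(P_{\mathrm{abs}}))$ by $k$th powers of paths through a greedy extend-or-merge procedure based on the typical degree properties of \Gnp, leaving only $o(|R|)$ vertices uncovered. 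Another round of the Connecting Lemma, using the remaining reservoir vertices, merges all of these power-paths together with $P_{\mathrm{abs}}$ into a $k$th power of a cycle $C^{\ast}$. The absorbing property of $P_{\mathrm{abs}}$ is finally invoked to absorb the $o(|R|)$ leftover vertices, producing the desired Hamilton $k$-cycle. The quasi-polynomial running time $n^{O(\log n)}$ arises because each invocation of the Connecting Lemma must search over connectors of length $\Theta(\log n)$, but every individual search step can be done in polynomial time.

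The hard part will be the Connecting Lemma and making it interact with the other steps at the near-optimal density. At $p^k \approx \log^8 n / n$, the expected number of $k$th powers of paths of a given short length with two prescribed endpoint $k$-tuples is only polylogarithmically (not exponentially) large, so a naive union bound over all endpoint pairs fails by a huge margin. The proposed remedy is to apply Janson's inequality to the random variable counting such connectors, combined with a greedy embedding that reveals the connector vertex by vertex, which yields sufficiently strong concentration to survive a union bound and, crucially, allows the same reservoir $R$ to support many connections simultaneously without a spoiling dependency between them. Once this lemma is in place, the absorber construction, the covering of most vertices, and the final closing step are essentially adaptations of arguments already present in K\"uhn--Osthus, adapted to the $k$th-power setting.
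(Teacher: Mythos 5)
Your overall architecture (absorbing method, a general Connecting Lemma proved via Janson's inequality plus greedy embedding, connectors of length $\Theta(\log n)$ accounting for the $n^{O(\log n)}$ running time) matches the paper, and your description of why the Connecting Lemma is the hard part is accurate. However, there is a genuine gap in how you dispose of the leftover vertices. The only absorber available at this density is an $(\mathbf{a},\mathbf{b},X)$-absorber for a \emph{prescribed} set $X$: it can swallow any subset of $X$, but nothing outside $X$. Each single-vertex gadget (a backbone graph with $\ell=\log n$ blocks joined by $\log n$-length connectors) occupies $\Theta(\log^2 n)$ vertices, so one cannot build an absorber ``for each vertex $v\in V(G)$'' as you propose in step (i) --- that would require $\omega(n)$ vertices --- and consequently $|X|$ is forced to be $O(n/\log^2 n)$. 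Your step (ii) leaves an \emph{arbitrary} set of $o(|R|)$ vertices uncovered and your step (iii) asks the absorber to swallow them; this fails because those vertices need not lie in $X$. A separate but related problem is your reservoir $R$: the connectors will not use all of $R$, and the unused reservoir vertices must also end up on the Hamilton cycle, which your plan does not address.

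The paper resolves both issues simultaneously by making the covering step \emph{exact} and by letting $X$ itself play the role of the reservoir. Concretely, after extracting the absorber $A$ from $G_1$, it partitions the uncovered set $U$ (padded by a few vertices of $X$) into $t=\lceil\log^4 n\rceil$ classes and builds $s$ vertex-disjoint $k$-paths, each meeting every class in exactly one vertex, by iterating the Erd\H{o}s--R\'enyi perfect-matching theorem in auxiliary bipartite random graphs $\mathcal G(s,s,q^{k'})$; this covers \emph{all} of $U$ with no leftover. The connecting paths that then stitch these $k$-paths and the absorber endpoints together draw their internal vertices only from $X$, so the set of ``unused'' vertices at the end is precisely a subset $X'\subseteq X$, which is exactly what the absorber can handle. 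To repair your proposal you would either need to replace the greedy extend-or-merge covering by such an exact covering, or guarantee that whatever your greedy procedure fails to cover is contained in $X$; as written, the absorption step does not go through.
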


In particular, Theorem \ref{thm:main_graph} determines a threshold for the existence of the square of a Hamilton cycle up to the logarithmic factor. Recently, Bennett, Dudek and Frieze \cite{bennett2016square} announced the improvement to $p \gg 1/\sqrt{n}$ using a second-moment method. 


Our second result gives a similar statement for the random $k$-uniform hypergraph model $\Gknp$, defined analogously to $\Gnp$. We say that a $k$-uniform hypergraph $G = (V, \calE)$ (a \emph{$k$-graph} for short) contains a \emph{tight Hamilton cycle} if there exists an ordering $(v_0, \ldots, v_{n-1})$ of $V$ such that $\{v_i, v_{i + 1}, \ldots, v_{i + k - 1}\} \in \calE$ for every $i \in \{0, \ldots, n - 1\}$ (where index addition is modulo $n$). 
Using the second-moment method, Dudek and Frieze \cite{dudek2013tight} have shown that $1/n$ is a threshold for the appearance of a tight Hamilton cycle in a random $k$-graph for any $k \ge 3$. 
An algorithmic proof for $p \ge n^{-1 + \varepsilon}$ was given by Allen, B\"ottcher, Kohayakawa and Person \cite{allen2013tight}, for any constant $\varepsilon > 0$. The next theorem, again, lies between these two results: the bound on $p$ is a logarithmic factor away from the optimal one and we obtain a quasi-polynomial algorithm (instead of a polynomial one which requires $p$ to be a factor of $n^{\eps}$ away from the smallest possible value, as in \cite{allen2013tight}). 

\begin{theorem}
    \label{thm:main_hypergraph}
    Let $k \ge 2$ be an integer. There exists $C > 0$ such that if $p \ge C \log^8 n/n$ then $\Gknp$ a.a.s contains a tight Hamilton cycle. Moreover, there exists a randomized algorithm with $n^{O(\log n)}$ running time which a.a.s finds such a cycle.
\end{theorem}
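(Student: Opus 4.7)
The plan is to follow the absorbing method as for tight Hamilton cycles in the style of Allen, B\"ottcher, Kohayakawa and Person, but with the general Connecting Lemma of the paper replacing the bespoke connection arguments that require $p \geq n^{-1+\eps}$. Let $G \sim \Gknp$ on vertex set $V$. A tight path in $G$ is a sequence of vertices whose every $k$ consecutive vertices form a hyperedge, and its \emph{ends} are the two $(k-1)$-tuples at either extreme; connecting tight paths is exactly connecting such $(k-1)$-tuples by a tight path through unused vertices, which is the task the Connecting Lemma is designed for.

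First I would build an \emph{absorbing tight path} $P_A$. For every vertex $v \in V$ call a short tight path $Q$ a $v$-absorber if $V(Q) \cup \{v\}$ also spans a tight path with the same two end $(k-1)$-tuples as $Q$. A direct first-moment computation shows that when $p \geq C \log^8 n / n$, a.a.s every $v$ lies in many disjoint absorbers; selecting a random family that contains an absorber for every vertex and then using the Connecting Lemma to splice them together produces $P_A$, with the property that any small set $L$ of vertices can be absorbed into $P_A$ so that the endpoints are preserved. Before doing this, I would set aside a random reservoir $R \subseteq V$ of size $n / \mathrm{polylog}(n)$: a union bound shows that $R$ still satisfies the hypotheses of the Connecting Lemma even when a few of its vertices have been consumed by earlier connections.

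Second, I would cover almost all the remaining vertices by a small number of long tight paths, extending greedily one vertex at a time; since $np \gg \log^8 n$, a Chernoff/martingale bound shows that this succeeds in covering all but at most $n/\mathrm{polylog}(n)$ vertices using polylogarithmically many tight paths. Third, I invoke the Connecting Lemma repeatedly, using only vertices from $R$, to splice these long paths together with $P_A$ into a single tight cycle that misses only a small leftover $L$ consisting of the unused reservoir vertices and the leftover of the greedy cover. Finally, the absorbing property of $P_A$ is used to swallow $L$, yielding a tight Hamilton cycle. The algorithm: the greedy cover and reservoir selection are polynomial, while each application of the Connecting Lemma looks for a tight path of length $O(\log n / \log(np))$ between prescribed $(k-1)$-tuples, and by trying all such candidates one obtains an $n^{O(\log n)}$ randomised algorithm.

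The main obstacle is exactly the Connecting Lemma at the near-optimal density $p = \Theta(\log^8 n / n)$: at this value of $p$, connecting two given $(k-1)$-tuples requires a tight path of \emph{logarithmic} length, and one needs both existence and a usable lower bound on the \emph{number} of such paths in order to (i) route many connections simultaneously through $R$ without collisions and (ii) avoid the vertices already placed in $P_A$ or in the long covering paths. This concentration is delivered by Janson's inequality applied to the greedy embedding of tight paths, which is the contribution highlighted in the abstract; once this lemma is in hand, the rest of the argument is a careful bookkeeping of which vertices are spent at each stage.
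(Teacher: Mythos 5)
Your high-level plan (absorber plus Connecting Lemma plus near-spanning cover) matches the paper's, but two of your steps break down precisely at the density $p=\Theta(\log^8 n/n)$ that the theorem targets. First, the absorber gadget: you do not say what a $v$-absorber is, and the natural constant-size gadget (a tight path $Q$ on $O(k)$ vertices such that $V(Q)\cup\{v\}$ also spans a tight path with the same ends) has $m_1$-density strictly greater than $1$, so its expected count in $\Gknp$ at $p=\Theta(\log^8 n/n)$ tends to \emph{zero}; a.a.s.\ no vertex has even one such gadget. Moreover, a first-moment computation can never certify existence. This is exactly the obstacle the paper's Section~5 is built to overcome: the backbone hypergraph $\HJkl$ spreads the ``switch'' over $\ell=\log n$ blocks joined by long tight paths so that $m_1(\HJkl)\le 1$ (Claim~\ref{claim:abs_dens}), and existence of many disjoint copies then follows from Janson's inequality via Corollary~\ref{cor:factor}, not from a first moment.

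Second, your absorption paradigm --- build $P_A$ containing ``an absorber for every vertex'' and then swallow an arbitrary leftover $L$ of the greedy cover --- cannot work here. If the gadgets for distinct vertices are vertex-disjoint sub-paths of $P_A$, then one gadget per vertex of $V$ forces $v(P_A)\ge (2k-1)n>n$. The dense-case escape (a single gadget serves as a $v$-absorber for a positive fraction of all $v$, so $O(\gamma n)$ random gadgets suffice) fails in a sparse random hypergraph, where a fixed gadget absorbs only the very few $v$ adjacent to the right tuples. Consequently the leftover of an \emph{approximate} cover, being an arbitrary unpredictable subset of $V$, cannot be absorbed. The paper sidesteps this by reversing the logic: the absorber is built for one \emph{pre-specified} set $X$ of size $n/16\log^2 n$ (it can release any unused subset of $X$), and the covering stage must therefore cover \emph{all} of $[n]\setminus V(A)$ exactly --- which is why it uses $t=\lceil\log^4 n\rceil$ rounds of perfect matchings in auxiliary bipartite random graphs (Theorem~\ref{thm:matching}) rather than a greedy extension that is allowed to leave vertices behind. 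To repair your argument you would need both the diluted gadget of Section~5 and this exact-cover mechanism (or some other way to guarantee the leftover lies inside a set fixed before the absorber is built).
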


The paper is organized as follows. In the next subsection we briefly describe the \emph{absorbing} method. In Section \ref{sec:definitions} we introduce some definitions needed in the later parts of the paper. In Section \ref{sec:conn_lemma} we prove a Connecting Lemma, our main technical tool. It is usual for proofs using the absorbing method to have some form of the Connecting Lemma. The presented proof differs from ones found in the literature and is based on Janson's inequality and a simple greedy embedding strategy. We believe ideas used in this proof are of independent interest. 
In Section \ref{sec:absorbers} we then define \emph{absorbers} and prove their existence in random (hyper)graphs. Finally, in Section \ref{sec:main} we put everything together and prove Theorems \ref{thm:main_graph} and \ref{thm:main_hypergraph}. In Section \ref{sec:remarks} we discuss some further research direction.

\subsubsection*{Notation}
Given a (hyper)graph $G = (V, E)$, we denote by $v(G)$ and $e(G)$ the size of the vertex and the edge set, respectively. Given (hyper)graphs $H, G$ and a function $f \colon V(H) \rightarrow V(G)$, we say that $f$ is an \emph{embedding} if it is a bijection and $f(e) \in E(G)$ for every $e \in H$. Moreover, if additionally $f(e) \in E(G)$ if and only if $e \in E(H)$ then $f$ is an \emph{isomorphism}. 

Given a set $V$ and $k \in \mathbb{N}$, we denote by $V^k$ the family of all `distinct' $k$-tuples in $V$, that is,
$$
	V^k := \{(v_1, \ldots, v_k) \colon v_1, \ldots, v_k \in V \text{ and } v_i \neq v_j \text{ for } i \neq j\}.
$$
We usually denote an element of $V^k$ with a bold small letter. Given $\mathbf{a} = (v_1, \ldots, v_k) \in V^k$, we denote by $\overline{\mathbf{a}}$ the \emph{reversed} $k$-tuple $(v_k, \ldots, v_1)$. For all set-theoretic notions, we treat $k$-tuples as sets. Thus, for example, we say that two $k$-tuples $\mathbf{a} = (a_1, \ldots, a_k), \mathbf{b} = (b_1, \ldots, b_k) \in V^k$ are \emph{disjoint} if $\{a_1, \ldots, a_k\} \cap \{b_1, \ldots, b_k\} = \emptyset$.


 
Throughout the paper we use the term \emph{threshold} to denote the coarse threshold and logarithm is always used with base two, unless stated otherwise. We use the standard asymptotic notation $O$, $o$, $\Omega$ and $\omega$. Furthermore, given two functions $a$ and $b$ we write $a \ll b$ if $a = o(b)$, and $a \gg b$ if $a = \omega(b)$.

\subsection{Overview of the absorbing method} 
\label{sec:overview_absorber}

Proofs of Theorem \ref{thm:main_graph} and \ref{thm:main_hypergraph} are based on the \emph{absorbing} method. Even though the proof of R\"odl, Rucinski and Szemer\'edi \cite{rodl2006dirac} is usually attributed of being the first to use the absorbing method, it was already used (though without naming it) by Erd\H{o}s, Gy\'arf\'as and Pyber \cite{erdos1991vertex}. Since then the method has found numerous applications in (random) graph theory (for example, see \cite{allen2013tight,ferber2015robust,krivelevich1997triangle,KuOs12,levitt2010avoid}). Here we closely follow arguments given in \cite{ferber2015robust,KuOs12}.

We demonstrate the basic idea in the simpler case of Hamilton cycles in graphs. Let $A$ be a graph and $a, b \in V(A)$ two distinct vertices. Given a subset $X \subseteq V(A)$, we say that $A$ is an \emph{$(a, b, X)$-absorber} if for every subset $X' \subseteq X$ there exists a path $P_{X'} \subseteq A$ from $a$ to $b$ such that $V(P) = V(A) \setminus X'$. Let now $G = (V, E)$ be a graph in which we want to find a Hamilton cycle and suppose there exists a large subset $X \subseteq V(G)$ and an $(a, b, X)$-absorber $A \subseteq G$, for some vertices $a, b \in V(A)$. An important observation is that if $G$ contains a path $P$ from $a$ to $b$ such that
$$
V \setminus V(A) \subseteq V(P) \qquad \text{ and } \qquad V(P) \cap (V(A) \setminus X) = \{a, b\},
$$
that is, $P$ uses all the vertices in $V \setminus V(A)$, no vertex from $V(A) \setminus X$ (except $\{a,b\}$) and some (potentially none) vertices from $X$, we are done. Indeed, let us denote by $X'$ the subset of used vertices from $X$ in such a path. Then by the definition of the absorber there exists a path $P_{X'} \in A$ from $a$ to $b$ and $V(P) = V(A) \setminus X'$, completing the Hamilton cycle (see Figure \ref{fig:proof_idea}). 

\begin{figure}[!ht] 
    \label{fig:proof_idea}      
    \centering
    \begin{tikzpicture}[scale = 0.6]
        \tikzstyle{blob} = [fill=black,circle,inner sep=1.5pt,minimum size=0.5pt]
        \tikzstyle{small} = [fill=black,circle,inner sep=1pt,minimum size=0.5pt]        
        
        \draw[rounded corners] (15, 10) -- (15, 0) -- (0, 0) -- (0, 10) -- (15, 10); \node at (7.5, -0.5) {\Large $G$};
        \draw[rounded corners] (8, 10) -- (8, 6) -- (15, 6); \node at (14.7, 9.7) {\large $A$};
        
        \draw[rounded corners] (8, 8.5) -- (12, 8.5) -- (12, 6); \node at (11.7, 8.1) {$X$};

        \draw[blue, line width=0.5mm] plot [smooth, tension=1]  coordinates 
        {
            (8.5, 9.5) (5, 9) (1.5, 9.3)  (1.5, 7) (7.5, 8) (7, 6.5) (1, 6.5) (2, 5) (8, 6.5) (7, 5) (1, 4) (2, 3) (8, 5) (7, 3) (1, 2) (2, 1)
            (5, 2) 
            (9, 1) (9, 7) (10, 5) (10, 1) (11, 2) (11, 5) (12, 4) (12, 1) (13, 1) (13, 5) (14, 4) (14, 1) (14.5, 2) (14, 6.5)
        };
        \fill[white] (8.1,6.1) rectangle (10,8);
        \draw[blue, thick, dashed] plot [smooth, tension=1]  coordinates 
        {
            (8.5, 9.5) (5, 9) (1.5, 9.3)  (1.5, 7) (7.5, 8) (7, 6.5) (1, 6.5) (2, 5) (8, 6.5) (7, 5) (1, 4) (2, 3) (8, 5) (7, 3) (1, 2) (2, 1)
            (5, 2) 
            (9, 1) (9, 7) (10, 5) (10, 1) (11, 2) (11, 5) (12, 4) (12, 1) (13, 1) (13, 5) (14, 4) (14, 1) (14.5, 2) (14, 6.5)
        };
        
        \node at (4.7, 1.5) {\textcolor{blue}{$P$}};
        \node at (8.6, 6.8) {\textcolor{blue}{\small $X'$}};
        
        \node at (11, 9.65) {\textcolor{red}{$P_{X'}$}};
        
        \draw[red, thick] plot [smooth, tension=1]  coordinates 
        {
            (8.5, 9.5) (8.7, 8) (9, 9) (9.5, 8) (10, 9.5) (10.5, 7) (11, 9.3) (11.5, 6.5)
            (12.7, 9.8) (13.3, 6.3) (13.7, 9) (14.5, 9) (14, 6.5)
        };
        
        \node[blob] at (8.5, 9.5) {}; \node at (9, 9.5) {$a$};
        \node[blob] at (14, 6.5) {}; \node at (14.5, 6.5) {$b$};

    \end{tikzpicture}
    \caption{Constructing a Hamilton cycle using the absorber $A$.}
\end{figure}
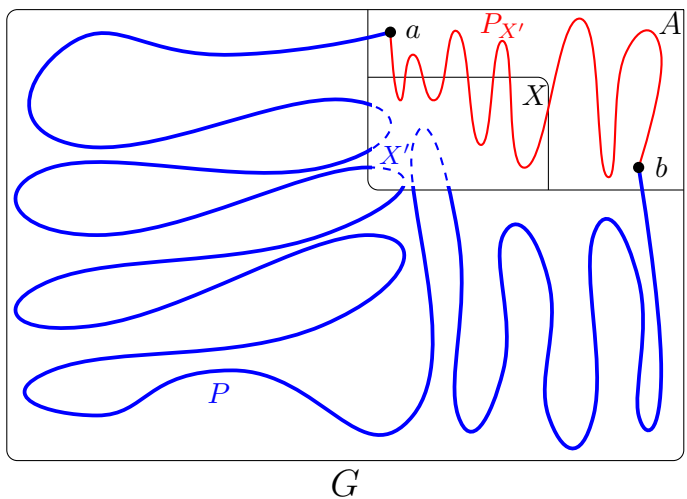

A priori it is not clear why this helps. However, it turns out that by following the described approach we split the problem into two easier ones. The first problem is to find an absorbing structure which is usually not larger than $\varepsilon n$. The second one is to find a path with prescribed endpoints which additionally covers all vertices from the set $V \setminus V(A)$ and an arbitrary subset of vertices from $X$. If $X$ is sufficiently large, this roughly corresponds to finding an almost-spanning path which is known to be a much easier problem.

Going from Hamilton cycles in graphs to Hamilton $k$-cycles in graphs and tight Hamilton cycles in $k$-graphs requires more careful definition of absorbers, but the proof idea is essentially the same as described. The main technical tool is a Connecting Lemma given in Section \ref{sec:conn_lemma}. Then, in the first step we find absorbers using Janson's inequality and the Connecting Lemma (Section \ref{sec:absorbers}). In the second step we find a desired  almost-spanning path using matchings in bipartite graphs and, again, the Connecting Lemma (Section \ref{sec:main}). 
The absorbers we use here stem from absorbers used in \cite{ferber2015robust,KuOs12}.

\section{Definitions of some graphs and hypergraphs} \label{sec:definitions}

The following graphs and hypergraphs are used often throughout the paper, thus we give their definitions here for easier reference:

\begin{itemize}
	\item \textbf{$(k, \ell)$-path} $\Kkl$: the $k$\textsuperscript{th} power of a path on $\ell$ vertices. More precisely, $\Kkl$ is the graph on the vertex set $\{u_1, \ldots, u_\ell\}$ and the edge set consisting of all $\{u_i, u_j\}$ such that $0 < j - i \le k$. 
	      
	      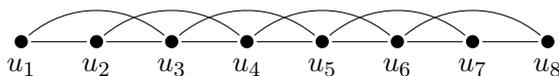
\begin{figure}[!ht]
	      	\centering
	      	\begin{tikzpicture}[%
	      			>=stealth,
	      			shorten >=1pt,
	      			shorten <=1pt
	      		]
	      		\tikzstyle{vertex} = [circle, fill, inner sep=0pt, minimum size=5pt]
	      		\tikzstyle{edge} = [-]			
	      		\tikzstyle{top} = [bend left=40]
	      		\tikzstyle{bot} = [bend right=40]
	      		
	      		\foreach \i in {1,...,8}{
	      			\node (v\i) [vertex, label=below:$u_\i$] at (\i,0) {};
	      		}
	      		
	      		\foreach \i in {1, ..., 7}{
	      			\draw[edge] let \n1={int(mod(\i+1,10))} in (v\i) to (v\n1);
	      		}
	      		
	      		\foreach \i in {1, ..., 6}{
	      			\draw[edge] let \n1={int(mod(\i+2,10))} in (v\i) edge [top] (v\n1);
	      		}				
	      	\end{tikzpicture}
	      	\caption{The $(2,8)$-path $\mathrm{P}_8^2$.}		
	      	\label{fig:k_path}
	      \end{figure}

	\item \textbf{$(k, \ell)$-connecting-path} $\Pkl$: the graph obtained from the $k$-path on $\ell$ vertices by removing the edges with both endpoints in either the first or the last $k$ vertices (see Figure \ref{fig:k_connecting}). More precisely, $\Pkl$ is the graph on the vertex set $\{u_1, \ldots, u_{\ell}\}$ and the edge set consisting of all $\{u_i, u_j\}$ such that $0 < j - i \le k$ and $|\{i, j\} \cap \{k+1, \ldots, \ell - k\}| \ge 1$. 
	      
	      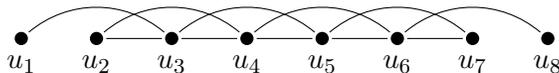
\begin{figure}[!ht]
	      	\centering
	      	\begin{tikzpicture}[%
	      			>=stealth,
	      			shorten >=1pt,
	      			shorten <=1pt
	      		]
	      		\tikzstyle{vertex} = [circle, fill, inner sep=0pt, minimum size=5pt]
	      		\tikzstyle{edge} = [-]			
	      		\tikzstyle{top} = [bend left=40]
	      		\tikzstyle{bot} = [bend right=40]
	      		
	      		\foreach \i in {1,...,8}{
	      			\node (v\i) [vertex, label=below:$u_\i$] at (\i,0) {};
	      		}
	      		
	      		\foreach \i in {2,...,6}{
	      			\draw[edge] let \n1={int(mod(\i+1,10))} in (v\i) to (v\n1);
	      		}
	      		
	      		\foreach \i in {1, ..., 6}{
	      			\draw[edge] let \n1={int(mod(\i+2,10))} in (v\i) edge [top] (v\n1);
	      		}				
	      	\end{tikzpicture}
	      	\caption{The $(2,8)$-connecting-path $\mathrm{CP}_8^2$.}		
	      	\label{fig:k_connecting}
	      \end{figure} 
	      
	\item \textbf{$(k, \ell)$-tight-path} $\Hkl$: the $(k+1)$-graph with the vertex set $\{u_1, \ldots, u_{\ell}\}$ and the edge set consisting of $\{u_i, \ldots, u_{i+k}\}$ for all $1 \le i \le \ell - k$.
	      
	      \begin{figure}[!ht]
	      	\centering
	      	\begin{tikzpicture}[%
	      			>=stealth,
	      			shorten >=1pt,
	      			shorten <=1pt
	      		]
	      		\tikzstyle{vertex} = [circle, fill, inner sep=0pt, minimum size=5pt]
	      		\tikzstyle{edge} = [-]			
	      		\tikzstyle{top} = [bend left=40]
	      		\tikzstyle{bot} = [bend right=40]
	      		
	      		\foreach \i in {1,...,8}{
	      			\node (v\i) [vertex] at (\i,0) {};
	      			\node at (\i, -0.6) {$u_\i$};
	      		}
	      		
	      		\foreach \i in {2, ..., 7}{
	      			\draw (\i,0) ellipse (1.2cm and 0.4cm);				
	      		}
	      	\end{tikzpicture}
	      	\caption{The $(2,8)$-tight-path $\mathrm{H}_8^2$.}		
	      	\label{fig:k_tight_path}
	      \end{figure}
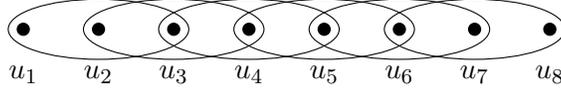 
\end{itemize}

In case we are not interested in the length, we simply write $k$-path, $k$-connecting-path and $k$-tight-path. 

Given a graph $G$ and disjoint $r$-tuples of vertices $\mathbf{a} = (a_1, \ldots, a_r)$ and $\mathbf{b} = (b_1, \ldots, b_r)$, a $k$-path from $\mathbf{a}$ to $\mathbf{b}$ is any subgraph $P \subseteq G$ such that there exists an isomorphism $f \colon P \rightarrow \Kkl$, for some $\ell \ge 2k$ (unless we specify the length), with $f((u_1, \ldots, u_r)) = \mathbf{a}$ and $f((u_{\ell - r + 1}, \ldots,u_\ell)) = \mathbf{b}$. A $k$-connecting-path and $k$-tight-path from $\mathbf{a}$ to $\mathbf{b}$ are defined analogously.

Given a $k$-path $P$, we denote by $\overline{P}$ the $k$-path obtained by traversing $P$ in the opposite direction. We define $\overline{P}$ analogously in case $P$ is a $k$-connecting-path or $k$-tight-path. Thus if $P$ is a $k$-path from $\mathbf{a}$ to $\mathbf{b}$, then $\overline{P}$ is a $k$-path from $\overline{\mathbf{b}}$ to $\overline{\mathbf{a}}$.

\section{Connecting Lemma} \label{sec:conn_lemma}


Given (hyper)graphs $G, F$ and $r$-tuples of vertices $\mathbf{x} \in V(F)^r, \mathbf{y} \in V(G)^r$, for some $0 \le r \le v(F)$, we say that $F' \subseteq G$ is an \emph{$(F, \mathbf{x}, \mathbf{y})$-copy} if there exists an isomorphism $f \colon F \rightarrow F'$ such that $f(\mathbf{x}) = \mathbf{y}$. In other words, an $(F, \mathbf{x}, \mathbf{y})$-copy `connects' the $r$-tuples  of vertices with the graph $F$ in some prescribed way. We call vertices $f(F) \setminus \mathbf{y}$ the \emph{internal vertices}. Using this terminology, a $k$-path from $\mathbf{a}$ to $\mathbf{b}$ in $G$, for some $r$-tuples $\mathbf{a}, \mathbf{b} \in V(G)^r$, is a $(\Kkl, \mathbf{u}, (\mathbf{a}, \mathbf{b}))$-copy for $\mathbf{u} = (u_1, \ldots, u_r, u_{\ell - r + 1}, \ldots, u_\ell)$ (see the previous section for notation).

Informally, the main result of this section, Lemma \ref{lemma:connecting}, considers the existence of a family of vertex-disjoint $(F, \mathbf{x}, \mathbf{y}_i)$-copies for some given family of $t$-many $r$-tuples $\{\mathbf{y}_i\}_{i \in [t]}$. The following definition makes this precise.

\begin{definition}
	Let $t \in \mathbb{N}$ be an integer, let $G, F$ be (hyper)graphs and $\mathbf{x} \in V(F)^r$ an $r$-tuple of vertices, for some $0 \le r \le v(F)$. Given a family $\mathcal{Y} = \{ \mathbf{y}_i \in V(G)^r \}_{i \in [t]}$ of $r$-tuples, we say that a collection $\{F_i \subseteq G\}_{i \in [t]}$ of subgraphs of $G$ forms an \emph{$(F, \mathbf{x}, \mathcal{Y})$-matching} if the following holds:
	\begin{itemize}
		\item $F_i$ is an $(F, \mathbf{x}, \mathbf{y}_i)$-copy for every $i \in [t]$, and
		\item $V(F_i) \cap V(F_j) = \emptyset$ for all $i \neq j \in [t]$.	
	\end{itemize}
\end{definition}

In other words, an $(F, \mathbf{x}, \mathcal{Y})$-matching `connects' prescribed tuples of vertices from $\mathcal{Y}$ with disjoint copies of $F$. The following lemma concerns the existence of such matchings in random graph and hypergraphs. 

\begin{definition}[$m(F, X)$-density] \label{def:density}
	Given a (hyper)graph $F$ and a proper (possibly empty) subset $X \subset V(F)$, we define the \emph{$m(F, X)$-density} as follows,
	$$
	m(F, X) = \max_{\substack{F' \subseteq F\\e(F')>0}} \left\{ \frac{e(F')}{v(F') - \max\{1, |V(F') \cap X|\}} \colon
	\text{either } X \subseteq V(F') \text{ or } X \cap V(F') = \emptyset \right\}.
	$$ 
\end{definition}


\begin{lemma}[Connecting Lemma] \label{lemma:connecting}
	Given integers $k \ge 2$ and $r \ge 0$ and positive $K \in \mathbb{R}$, there exist $C > 1$ and $n_0 \in \mathbb{N}$ such that the following holds for every $n \ge n_0$. Let $F$ be a $k$-graph with $v(F) \ge r + 2$ vertices (note that $F$ can depend on $n$), $\mathbf{x} \in V(F)^r$ an $r$-tuple of independent vertices in $F$ and $W \subseteq [n]$ a subset of size $|W| \ge C v(F)^4 \log^2 n$. If 
	$$
	p \ge \left( \frac{C v(F)^4 \log^2 n}{|W|} \right)^{1/m(F, \mathbf{x})}
	$$
	then $G \sim \Gknp$ has the following property with probability at least $1 - n^{-K}$: 
					
	For every $t \in \mathbb{N}$ such that $t (v(F) - r) \le |W|/4$ and a family $\mathcal{Y} = \{\mathbf{y}_i \in (V(G) \setminus W)^t\}_{i \in [t]}$ of disjoint $r$-tuples, there exists an $(F, \mathbf{x}, \mathcal{Y})$-matching in $G$ with all internal vertices being in $W$. Moreover, there exists an algorithm which finds such a $(F, \mathbf{x}, \mathcal{Y})$-matching in $n^{O(v(F))}$ time.
\end{lemma}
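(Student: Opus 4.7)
The plan is a greedy embedding. Process the $(F, \mathbf{x}, \mathbf{y}_i)$-copies in the order $i = 1, 2, \ldots, t$, keeping their internal vertex sets pairwise disjoint and inside $W$. At step $i$, the internal vertices used so far form a set $U_{i-1} \subseteq W$ of size at most $(i-1)(v(F) - r) \le |W|/4$, so at least $3|W|/4$ of $W$ remains free. The whole problem thus reduces to the following property of $G$, which I would show holds with probability at least $1 - n^{-K}$: for every $\mathbf{y} \in (V(G) \setminus W)^r$ and every $W^* \subseteq W$ with $|W^*| \ge 3|W|/4$, there is an $(F, \mathbf{x}, \mathbf{y})$-copy in $G$ with internal vertices in $W^*$.

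For this ``for every'' statement, I would apply Janson's inequality to $X = X(\mathbf{y}, W^*)$, the number of such copies. The expectation $\mu = \Exp[X]$ is of order $|W^*|^{v(F) - r} p^{e(F)}$. The standard ``shared subgraph'' decomposition of the correlation term $\Delta$ (into contributions from $F' \subseteq F$ containing $\mathbf{x}$ and with $e(F') \ge 1$) gives
\[
    \frac{\mu^2}{\Delta} \ge c \cdot \min_{F'} |W^*|^{v(F') - r} p^{e(F')}.
\]
This is where $m(F, \mathbf{x})$ enters: the hypothesis $p^{m(F, \mathbf{x})} \ge C v(F)^4 \log^2 n / |W|$ is calibrated exactly so that each factor in the minimum is at least a large multiple of $v(F)^4 \log^2 n$. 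Janson then delivers $\Prob[X = 0] \le \exp(-\Omega(v(F)^4 \log^2 n))$, which easily absorbs the $n^r$ union bound over $\mathbf{y}$.

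The main obstacle is the union bound over $W^*$ --- there are up to $2^{|W|}$ such subsets, and $|W|$ can be polynomial in $n$, dwarfing the Janson exponent $v(F)^4 \log^2 n$. I would sidestep this by not enumerating $W^*$ at all: for each fixed $\mathbf{y}$, apply Janson a second time to show that, uniformly over $u \in W$, the number of copies containing $u$ as an internal vertex is at most twice its mean $(v(F)-r)\mu/|W|$. A double-counting then bounds the number of copies touching a prescribed $U \subseteq W$ of size at most $|W|/4$, leaving copies that live entirely in $W \setminus U$. The main technicality is to keep the losses from this counting within the factor $v(F) - r$ that the hypothesis $t(v(F) - r) \le |W|/4$ seems to allow; the clean fix, I would guess, is to perform the per-tuple embedding itself in chunks corresponding to an ordering of $V(F) \setminus \mathbf{x}$ whose per-chunk ``backward densities'' are all at most $m(F, \mathbf{x})$, applying Janson once per chunk. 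The algorithmic claim follows by brute-force enumeration of the at most $n^{v(F)-r}$ candidate internal vertex sets at each step, giving the stated $n^{O(v(F))}$ running time.
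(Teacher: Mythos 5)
Your reduction to the statement ``for every $\mathbf{y}$ and every $W^*\subseteq W$ with $|W^*|\ge 3|W|/4$ there is an $(F,\mathbf{x},\mathbf{y})$-copy with internal vertices in $W^*$'' is sound, and you correctly identify the union bound over $W^*$ as the crux. But the workaround you propose does not close. First, the ``second application of Janson'' to bound, uniformly over $u\in W$, the number of copies containing $u$ by twice its mean is not available: Janson's inequality (Theorem \ref{thm:Janson}) is a \emph{lower}-tail bound, and upper tails for subgraph counts are a genuinely hard problem (and here $F$ may have $\Theta(\log n)$ vertices, so even ad hoc upper-tail arguments are delicate). Second, even granting that bound, the double counting fails arithmetically: the total number of copies is about $\mu$, each vertex of $W$ lies in about $(v(F)-r)\mu/|W|$ of them, so a used set $U$ of size $|W|/4$ can destroy up to $\tfrac{1}{4}\cdot 2(v(F)-r)\mu \ge \mu$ copies whenever $v(F)-r\ge 2$, which always holds. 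Nothing is guaranteed to survive, so the factor $v(F)-r$ is not a ``technicality'' but the whole difficulty, and the proposed ``chunked embedding'' fix is too vague to repair it.

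The paper resolves this differently, via Claim \ref{claim:expansion_F_matching}: instead of asking that a \emph{specific} tuple be connectable while avoiding a \emph{specific} used set, it shows that for any $t$ tuples and any $D\subseteq S$ with $|D|\le t(v(F)-r)$, \emph{at least one} of the $t$ tuples admits a copy inside $S\setminus D$. The point is that the Janson mean for this event scales linearly in $t$, namely $\mu\ge C' t\, v(F)\log n$, while the union bound over $D$ costs only $\binom{|S|}{t(v(F)-r)}\le 2^{t\,v(F)\log n}$ and over the family of tuples only $2^{rt\log n}$ --- so the exponential gain beats the exponential cost for every $t$ simultaneously. A single pass of the greedy then only guarantees that at least half the remaining tuples get connected, so the lemma is proved by running $\log n$ rounds on disjoint pieces $W_1,\dots,W_{\log n}$ of $W$ of geometrically shrinking size, halving the set of unconnected tuples each round; this is also why the hypothesis carries $\log^2 n$ rather than $\log n$. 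Your one-pass greedy skeleton and the final brute-force algorithmic claim are fine, but the probabilistic heart of the argument is missing.
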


The proof of the Connecting Lemma relies on the following claim which we prove in Section \ref{sec:proof_of_expansion}.  

\begin{claim} \label{claim:expansion_F_matching}
	Given integers $k \ge 2$ and $r \ge 0$ and positive $K \in \mathbb{R}$, there exist $C > 1$ and $n_0 \in \mathbb{N}$ such that the following holds for every $n \ge n_0$. Let $F$ be a $k$-graph with $v(F) \ge r + 2$ vertices, $\mathbf{x} \in V(F)^r$ an $r$-tuple of independent vertices in $F$ and $S \subseteq [n]$ a subset of size $|S| \ge C v(F)^4 \log n$. If 
	$$
	p \ge \left( \frac{C v(F)^4 \log n}{|S|} \right)^{1/m(F, \mathbf{x})}
	$$
	then $G \sim \Gknp$ has the following property with probability at least $1 - n^{-K}$: 	

	For every $t \in \mathbb{N}$ such that $t (v(F) - r) \le |S|/2$, every  subset $D \subseteq S$ of size $|D| \le t (v(F) - r)$, and every  family $\{\mathbf{y}_i \in (V(G) \setminus S)^r\}_{i \in [t]}$ of disjoint $r$-tuples, 
	there exists an $(F, \mathbf{x}, \mathbf{y}_i)$-copy $F_i \subseteq G$, for some $i \in [t]$, with $V(F_i) \setminus \mathbf{y}_i \subseteq S \setminus D$.
\end{claim}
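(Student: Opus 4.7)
The plan is to prove the claim via Janson's inequality, then union-bound over the relevant choices. Fix $t \in \mathbb{N}$ with $t(v(F)-r) \le |S|/2$, a subset $D \subseteq S$ with $|D| \le t(v(F)-r)$, and a family $\mathcal{Y} = \{\mathbf{y}_i\}_{i \in [t]}$ of disjoint $r$-tuples in $V(G) \setminus S$. Let $X$ count the number of $(F, \mathbf{x}, \mathbf{y}_i)$-copies in $G$, summed over $i \in [t]$, whose internal vertices lie in $S \setminus D$. Since $|S \setminus D| \ge |S|/2$ and $\mathbf{x}$ is independent in $F$, a direct count of injective placements of $V(F) \setminus \mathbf{x}$ into $S \setminus D$ (each of which succeeds with probability $p^{e(F)}$) yields
\[
\mu \;=\; \Exp[X] \;=\; \Omega\!\bigl(t \cdot (|S|/v(F))^{v(F)-r}\, p^{e(F)}\bigr),
\]
which, by the density hypothesis applied to $F$ itself (in the ``$X \subseteq V(F')$'' branch of Definition~\ref{def:density}), is at least $t \cdot \bigl(\Omega(C v(F)^3 \log n)\bigr)^{v(F)-r}$, hence enormous.

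The key step is the Janson estimate for $\Delta$. Because the $\mathbf{y}_i$'s are pairwise disjoint, every pair of overlapping copies falls into exactly one of two regimes: (a) they share the same $\mathbf{y}_i$, in which case the pre-image intersection in $F$ contains $\mathbf{x}$, or (b) they come from distinct $\mathbf{y}_i, \mathbf{y}_j$, in which case the intersection avoids $\mathbf{x}$ entirely and lies in $S \setminus D$. These correspond precisely to the two branches of Definition~\ref{def:density}. Rather than parametrize over all labeled subgraphs of $F$ (which would incur a factor $2^{e(F)}$, fatal when $v(F) = \omega(1)$), I would parametrize by the intersection size $s = |V(F_i) \cap V(F_j)|$ and define $e_{\max}(s)$ as the maximum of $e(F[U])$ over $s$-vertex subsets $U \subseteq V(F)$ admissible for the relevant branch. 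A direct count gives, in the two cases,
\[
\Delta_s^{(a)} \le O\!\bigl(t\, |S|^{2v(F)-s-r}\, p^{2e(F)-e_{\max}(s)}\bigr) \quad\text{and}\quad \Delta_s^{(b)} \le O\!\bigl(t^2\, |S|^{2(v(F)-r)-s}\, p^{2e(F)-e_{\max}(s)}\bigr).
\]
Applying the density hypothesis to the extremal $s$-vertex subgraph yields $\mu^2/\Delta_s^{(a)} \ge t\,(C v(F)^4 \log n)^{s-r}$ and $\mu^2/\Delta_s^{(b)} \ge (C v(F)^4 \log n)^{s-1}$. These grow exponentially in $s$, so summing over $s \ge r+1$ in case (a) and $s \ge 2$ in case (b) gives a geometric series bounded by a constant times its smallest term, and $\mu^2/\Delta \ge \Omega\bigl(C t\, v(F)^4 \log n\bigr)$.

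Janson's inequality then yields $\Prob[X = 0] \le \exp\!\bigl(-\Omega(Ct v(F)^4 \log n)\bigr)$. The union bound over the at most $n$ choices of $t$, the at most $n^{t(v(F)-r)}$ choices of $D$, and the at most $n^{rt}$ choices of $\mathcal{Y}$ contributes a factor $n^{O(t v(F))}$, which is dominated provided $C$ is chosen sufficiently large in terms of $K$; summing over $t$ loses only an extra factor of $n$.

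The main obstacle will be the second step. A naive Janson accounting that sums over all labeled subgraphs of $F$ introduces an exponential factor $2^{e(F)}$, which is fatal in the intended applications where $v(F) = \Theta(\log n)$ (and hence $e(F)$ can be comparable). The resolution is to parametrize by intersection size $s$ and use only the extremal $s$-vertex subgraph in each branch of Definition~\ref{def:density}; the slightly unusual case split in that definition (``$X \subseteq V(F')$'' or ``$X \cap V(F') = \emptyset$'') is precisely tailored to match the two regimes of overlap between copies with shared versus distinct tuples $\mathbf{y}_i$.
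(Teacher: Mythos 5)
Your proposal follows essentially the same route as the paper: a union bound over $t$, $D$ and $\mathcal{Y}$, Janson's inequality, the same two-case split of the overlap term according to whether the intersection contains the image of $\mathbf{x}$ (matching the two branches of Definition~\ref{def:density}), and a parametrization by intersection size rather than by all subgraphs of $F$ --- the paper achieves the same effect by restricting to ``lexicographical'' copies, which are determined by their vertex sets. One minor arithmetic point: your stated bound $\mu^2/\Delta_s^{(b)} \ge (C v(F)^4 \log n)^{s-1}$ drops a factor of order $|S| \gtrsim t\, v(F)$ that is present in the paper's calculation and is needed to recover the factor $t$ appearing in your final conclusion; a careful count restores it.
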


We now use Claim \ref{claim:expansion_F_matching} to derive the Connecting Lemma.

\begin{proof}[Proof of the Connecting Lemma (Lemma \ref{lemma:connecting})]
	Let $C'$ be a constant given by Claim \ref{claim:expansion_F_matching} for $k$ and $K+1$ (as $K$). We prove the lemma with $C = 2C'$, that is
	$$
	p \ge \left(\frac{2 C' v(F)^4 \log^2 n}{|W|} \right)^{1/m(F, \mathbf{x})}
	$$
	where $W$ is a given subset. Let $W_1, \ldots, W_{\log n} \subseteq W$ be disjoint subsets such that each $W_i$ is of size $|W_i| = \max\{|W| / 2^{i+1}, |W| / 2\log n\}$. Then $G \sim \Gknp$ a.a.s satisfies the property of Claim \ref{claim:expansion_F_matching} for every $W_i$ (as $S$). This follows from a simple union bound over all $W_i$'s and the fact that for a particular one the property holds with probability at least $1 - 1 / n^{K+1}$. We show that such $G$ contains an $(F, \mathbf{x}, \mathcal{Y}$)-matching for every (valid) family $\mathcal{Y}$.
					
	To this end, consider a family of disjoint $r$-tuples $\mathcal{Y} = \{\mathbf{y}_i \in (V(G) \setminus W)^{r}\}_{i \in [t]}$, for some $t \in \mathbb{N}$ such that $t(v(F) - r) \le |W|/4$. We obtain a desired $(F, \mathbf{x}, \mathcal{Y})$-matching in $\log n$ rounds. Set $R_0 := [t]$ and in each round $1 \le j \le \log n$ greedily construct an $(F, \mathbf{x}, \mathcal{Y}_j)$-matching with all internal vertices in $W_j$ as follows: set $\mathcal{F}_j := \emptyset$ and $I_j := \emptyset$ and as long as there exists an $(F, \mathbf{x}, \mathbf{y}_i)$-copy $F_i \subseteq G$, for some $i \in R_{j-1} \setminus I_j$, which is vertex-disjoint from all copies in $\mathcal{F}_j$ and has all internal vertices in $W_j$, add $F_i$ to $\mathcal{F}_j$ and $i$ to $I_j$. Once there is no such $i \in R_{j-1} \setminus I_j$, set $R_j := R_{j-1} \setminus I_j$ and proceed to the next round. Note that $\mathcal{F}_j$ forms an $(F, \mathbf{x}, \mathcal{Y}_j)$-matching for $\mathcal{Y}_j = \{\mathbf{y}_i\}_{i \in I_j}$. Furthermore, as in each round we only use vertices from $W_j$ as internal ones, matchings constructed in different rounds are clearly vertex-disjoint. Therefore, if by the end of the last round we matched all $r$-tuples, that is, $R_{\log n} = \emptyset$, then we have an $(F, \mathbf{x}, \mathcal{Y})$-matching. We show that $|R_j| \le t / 2^j$ after every step $j$, which implies the desired conclusion as $t < n$.
					
	Suppose towards a contradiction that after some round $j$ we have $|R_j| > t / 2^j$ and, furthermore, suppose $j \ge 1$ is the first such round. Note that this implies $|R_{j-1}| \le t / 2^{j-1}$ and therefore $|I_j| < t / 2^j$. Moreover, the $(F, \mathbf{x}, \mathcal{Y}_j)$-matching $\mathcal{F}_j$ has exactly $|I_j| (v(F) - r)$ internal vertices in $W_j$, thus the set $D_j \subseteq W_j$ of `used' vertices is of size at most
	\begin{equation} \label{eq:W_j}
		|D_j| = |I_j| (v(F) - r) \le \frac{t}{2^j} (v(F) - r).
	\end{equation}
	On the other hand, as $t(v(F) - r) \le |W|/4$ we have
	\begin{equation} \label{eq:W_j2}
		\frac{t}{2^j} (v(F) - r) \le \frac{|W|}{4 \cdot 2^j} \le |W_j|/2.
	\end{equation}
	Pick an arbitrary subset $R' \subseteq R_j$ of size $|R'| = t / 2^j$. We can apply the property of Claim \ref{claim:expansion_F_matching} with $D_j$ (as $D$), $W_j$ (as $S$) and $\{\mathbf{y}_i\}_{i \in R'}$ (owing to \eqref{eq:W_j} and \eqref{eq:W_j2}) to conclude that there exists a $(F, \mathbf{x}, \mathbf{y}_i)$-copy with all internal vertices being in $W_j \setminus D_j$, for some $i \in R'$. As such copy is vertex-disjoint from all other copies in $\mathcal{F}_j$ we get a contradiction with the assumption that the procedure has finished the round $j$.
					
	By checking for each $i \in R_j$ whether it can be added to $I_j$, that is, whether there exists an $(F, \mathbf{x}, \mathbf{y}_i)$-copy with internal vertices in $W_j$ and which is vertex-disjoint from $\mathcal{F}_j$, can be done in $n^{O(v(F))}$ time by simply trying all possible choices for such a copy. As we finish in $\log n$ rounds and in each round we perform this step for $t / 2^j < n$ indices, this gives a simple $n^{O(v(F))}$ algorithm for constructing an $(F, \mathbf{x}, \mathcal{Y})$-matching.
\end{proof}

\subsection{Some useful corollaries of the Connecting Lemma} \label{sec:corollaries}

In this section we collect some corollaries used in the proof of our main theorems. The following corollary spells out the Connecting Lemma in case $r = 0$. Note that in this case the definition of $m(F, \emptyset)$-density coincides with the well-known \emph{$m_1$-density},
$$
m_1(F) = \max_{\substack{F' \subseteq F \\ e(F') > 0}}\{ e(F') / (v(F') - 1) \}.
$$

\begin{corollary} \label{cor:factor}
	For every integer $k \ge 2$, there exists $C > 0$ such that the following holds for every $k$-graph $F$ (which might depend on $n$) and a subset $W \subseteq [n]$ of size $|W| \ge C v(F)^4 \log^2 n$. If 
	$$
	p \ge \left( \frac{C v(F)^4 \log^2 n}{|W|} \right)^{1/m_1(F)}
	$$	
	then $G \sim \Gknp$ a.a.s contains a family of at least $|W| / 4v(F)$ pairwise vertex-disjoint copies of $F$ with all vertices being in $W$.
\end{corollary}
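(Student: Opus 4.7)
The plan is to deduce this corollary as a direct specialization of the Connecting Lemma (Lemma \ref{lemma:connecting}) to the case $r = 0$. First I would verify that the densities match: when $X = \emptyset$, the side condition in Definition \ref{def:density} is vacuous (every $F' \subseteq F$ satisfies $X \cap V(F') = \emptyset$), and $\max\{1, |V(F') \cap X|\} = 1$, so
\[
m(F, \emptyset) = \max_{\substack{F' \subseteq F \\ e(F') > 0}} \frac{e(F')}{v(F') - 1} = m_1(F).
\]
Hence the bound on $p$ assumed in the corollary is exactly the bound required by Lemma \ref{lemma:connecting} with $r = 0$.

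Next I would unwrap what an $(F, \mathbf{x}, \mathcal{Y})$-matching becomes when $r = 0$. In this case $\mathbf{x}$ is the empty tuple, $V(G)^0$ consists only of the empty tuple, and the disjointness requirement on the family $\mathcal{Y}$ is vacuous. Thus, for any $t$, taking $\mathcal{Y}$ to be the family of $t$ empty tuples is admissible, and an $(F, \mathbf{x}, \mathbf{y}_i)$-copy is simply a copy of $F$ in $G$ while an $(F, \mathbf{x}, \mathcal{Y})$-matching is a collection of $t$ pairwise vertex-disjoint copies of $F$. The Connecting Lemma additionally guarantees that all internal vertices (which, for $r = 0$, means \emph{all} vertices) of these copies lie inside $W$.

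I would then choose $t = \lfloor |W| / (4 v(F)) \rfloor$, which satisfies the hypothesis $t(v(F) - r) \le |W|/4$ of Lemma \ref{lemma:connecting}, and take the parameter $K$ in the Connecting Lemma to be any positive constant (e.g.\ $K = 1$) so that the conclusion holds with probability at least $1 - n^{-1} = 1 - o(1)$. Applying the lemma with $C$ being the constant produced by that application yields the desired family of at least $|W|/(4 v(F))$ pairwise vertex-disjoint copies of $F$, all living inside $W$. No real obstacle arises; the only step requiring care is the bookkeeping check that $m(F, \emptyset)$ indeed coincides with $m_1(F)$ and that taking $\mathcal{Y}$ to be repeated copies of the empty tuple is a legitimate input to Lemma \ref{lemma:connecting}.
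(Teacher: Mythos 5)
Your proposal is correct and matches the paper's (implicit) argument exactly: the paper states this corollary as an immediate specialization of Lemma \ref{lemma:connecting} to $r=0$, noting precisely the identity $m(F,\emptyset)=m_1(F)$ and that an $(F,\emptyset,\mathcal{Y})$-matching with all internal vertices in $W$ is just a family of $t$ disjoint copies of $F$ inside $W$. Your choice $t=\lfloor |W|/(4v(F))\rfloor$ and the check of the hypothesis $t\,v(F)\le |W|/4$ are exactly the required bookkeeping.
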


The following corollary is tailored for the proof of Theorem \ref{thm:main_graph}.

\begin{corollary}[Graph Connecting Lemma] \label{cor:connecting}
	Given an integer $k \ge 2$ and a subset $W \subseteq [n]$ of size $|W| \ge n / \log^3 n$, there exists a constant $C = C(k) > 0$ such that if 
	$$
	p \ge \left( \frac{C \log^6 n}{|W|} \right)^{1/k}
	$$
	then $G \sim \Gnp$ a.a.s has the following property: for every family of disjoint $2k$-tuples $\{(\mathbf{a}_i, \mathbf{b}_i) \in ([n] \setminus W)^{2k}\}_{i \in [t]}$ of size $t \le |W| / 4 \log n$, there exists a family $\{P_i \subseteq G\}_{i \in [t]}$ of vertex-disjoint $(k, \log n)$-connecting-paths such that
	\begin{enumerate}[(i)]
		\item $P_i$ is a $(k, \log n)$-connecting-paths from $\mathbf{a}_i$ to $\mathbf{b}_i$, and
		\item $V(P_i) \setminus \{\mathbf{a}_i \cup \mathbf{b}_i\} \subseteq W$.
	\end{enumerate}
	Moreover, such $k$-connecting-paths can be found in $n^{O(\log n)}$-time.
\end{corollary}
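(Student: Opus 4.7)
The plan is to apply the Connecting Lemma (Lemma~\ref{lemma:connecting}) directly with $F = \Pkl$ (the $(k, \log n)$-connecting-path, so $v(F) = \ell := \log n$), viewed as a $2$-uniform graph, with $r = 2k$, and with $\mathbf{x} = (u_1, \ldots, u_k, u_{\ell-k+1}, \ldots, u_\ell)$. Setting $\mathcal{Y} := \{(\mathbf{a}_i, \mathbf{b}_i)\}_{i \in [t]}$, an $(F, \mathbf{x}, \mathcal{Y})$-matching is by definition exactly a family of vertex-disjoint $(k, \log n)$-connecting-paths $P_i$ from $\mathbf{a}_i$ to $\mathbf{b}_i$ with $V(P_i) \setminus (\mathbf{a}_i \cup \mathbf{b}_i) \subseteq W$, so the conclusion of the Lemma is precisely the conclusion we want. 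The $n^{O(v(F))} = n^{O(\log n)}$ runtime is also inherited.

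Three easy hypotheses of Lemma~\ref{lemma:connecting} have to be verified. First, $\mathbf{x}$ is independent in $\Pkl$: no edges exist among the first $k$ or among the last $k$ vertices by the definition of $\Pkl$, and no edges go between the two blocks since $(\ell - k + 1) - k = \ell - 2k > k$ for $n$ large. Second, $|W| \ge n/\log^3 n \ge C \log^6 n = C v(F)^4 \log^2 n$ holds for $n$ large. Third, $t(v(F) - r) = t(\log n - 2k) \le t \log n \le |W|/4$, using $t \le |W|/(4\log n)$.

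The crux is the $p$-bound, which amounts to controlling $m(F, \mathbf{x})$. A standard edge-count in the $k$-th power of a path gives $e(F'') \le k\,v(F'') - k(k+1)/2$ for every $F'' \subseteq \Kkl$ with $v(F'') \ge k+1$, since each vertex contributes at most $k$ backward edges. In the case $\mathbf{x} \cap V(F') = \emptyset$ we have $F' \subseteq \mathrm{P}^k_{\ell - 2k}$ and this immediately yields $e(F')/(v(F') - 1) \le k$. In the case $\mathbf{x} \subseteq V(F')$, every middle vertex of $\Pkl$ has degree exactly $2k$ while the endpoint vertices together contribute only $k(k+1)$ to the degree sum; a quick check shows that removing any one middle vertex from $F'$ strictly decreases $e(F')/(v(F') - 2k)$ provided $v(F') - 2k \ge (k+1)/2$, so the densest admissible $F'$ is $\Pkl$ itself, for which $e(\Pkl)/(\ell - 2k) = k + k(k+1)/(2(\ell - 2k))$. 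Hence $m(F, \mathbf{x}) = k + O(k^2/\log n)$.

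Writing $x := C \log^6 n / |W| \le C \log^9 n / n$, the ratio between the Lemma's required threshold $x^{1/m(F,\mathbf{x})}$ and our assumed threshold $x^{1/k}$ equals $x^{(k - m(F,\mathbf{x}))/(k\,m(F,\mathbf{x}))} = x^{-O(1/\log n)} \le (n/\log^9 n)^{O(1/\log n)} = e^{O(1)}$, i.e.\ a bounded constant. Enlarging the corollary's constant $C$ by this constant to the $k$-th power thus forces our assumption $p \ge (C \log^6 n / |W|)^{1/k}$ to imply $p \ge (C \log^6 n / |W|)^{1/m(F,\mathbf{x})}$, and the corollary follows. The one substantive obstacle is exactly this final step: because $m(F, \mathbf{x})$ does strictly exceed $k$ due to boundary edges of $\Pkl$, one must pin down the excess to $O(k^2/\log n)$ and check that, thanks to $\ell = \log n$ being slightly super-constant, this only perturbs the base of the $p$-threshold by a constant factor absorbable into $C$.
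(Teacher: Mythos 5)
Your overall route is exactly the paper's: apply Lemma~\ref{lemma:connecting} with $F=\Pkl$, $r=2k$, $\mathbf{x}=(u_1,\dots,u_k,u_{\ell-k+1},\dots,u_\ell)$, check independence of $\mathbf{x}$ and the size conditions, bound $m(\Pkl,\mathbf{x})$ by $k+O(\mathrm{poly}(k)/\log n)$, and observe that since $\ell=\log n$, the resulting perturbation of the exponent only changes the threshold by a constant factor (your final computation matches the paper's \eqref{eq:plug_m}). The hypotheses checks and the endgame are fine.

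The gap is in your justification of $m(\Pkl,\mathbf{x})\le k+O(k^2/\log n)$ in the case $\mathbf{x}\subseteq V(F')$. The exchange step ``removing any one middle vertex strictly decreases $e(F')/(v(F')-2k)$'' is equivalent to $e(F') < \deg_{F'}(v)\cdot(v(F')-2k)$, and this fails whenever the removed vertex has small degree in $F'$ — for instance when $F'$ consists of $\mathbf{x}$ together with a few scattered middle vertices, some of which are isolated or nearly so in the induced subgraph. These are precisely the dangerous configurations, because there the denominator $v(F')-2k$ is tiny and the global count $e(F')\le k\,v(F')-O(k^2)$ only gives ratio $\le k+O(k^2)/(v(F')-2k)$, which is useless unless $v(F')=\Omega(\ell)$. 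So you cannot conclude that the maximizer is $\Pkl$ itself; you need a separate argument for admissible $F'$ with $v(F')$ small. The paper's fix: if $v(F')<(\ell-3k)/k$ then the two blocks $\{u_1,\dots,u_k\}$ and $\{u_{\ell-k+1},\dots,u_\ell\}$ must lie in \emph{different connected components} of $F'$ (a connected subgraph of $\Pkl$ joining them needs $\ge(\ell-2k)/k$ vertices), and each component containing one block loses $\binom{k+1}{2}+\dots$ boundary edges at that block, yielding $e(F')\le k(v(F')-2k)$ and hence ratio $\le k$ in that regime. With that case added, your argument is complete and coincides with the paper's proof.
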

\begin{proof}
	As pointed out earlier, a subgraph $P_i \subseteq G$ is a $(k, \log n)$-connecting-path from $\mathbf{a}_i$ to $\mathbf{b}_i$ if and only if it is an $(\Pkl, \mathbf{u}, (\mathbf{a}_i, \mathbf{b}_i))$-copy, where $\mathbf{u} = (u_1, \ldots, u_k, u_{\ell - k + 1}, \ldots, u_\ell)$ and $\ell = \log n$ (see Section \ref{sec:definitions}). Therefore, it suffices to show that for every family $\mathcal{Y} = \{(\mathbf{a}_i, \mathbf{b}_i)\}_{i \in [t]}$ of $2k$-tuples satisfying given conditions there exists an $(\Pkl, \mathbf{u}, \mathcal{Y})$-matching with all internal vertices being in $W$. By Lemma \ref{lemma:connecting}, $G \sim \Gnp$ has such a property for
	$$
		p = \Omega \left(\log^6 n / |W| \right)^{1/m(\Pkl, \mathbf{u})},
	$$
	We show that $m(\Pkl, \mathbf{u}) \le k + 8 k^3 / \log n$. Note that this implies $1 / m(\Pkl, \mathbf{u}) \ge \frac{1}{k} - \frac{8k}{\log n}$ (with room to spare), and therefore
	\begin{align}
		\left( \log^6 n / |W| \right)^{1 / m(\Pkl, \mathbf{u})} &\le \left( \log^6 n / |W| \right)^{\frac{1}{k} - \frac{8k}{\log n}} \nonumber \\
		&=   \left( \log^6 n / |W| \right)^{k} \left( |W| / \log^6 n\right)^{8k / \log n} \nonumber \\
		&\le \left( \log^6 n / |W| \right)^{k} n^{8k / \log n} \nonumber \\
		& = O\left( \log^6 n / |W| \right)^{k},  \label{eq:plug_m}              
	\end{align}
	as desired.

	Consider some graph $F' \subseteq \Pkl$ such that $e(F') > 0$ and either $V(F') \cap \mathbf{u} = \emptyset$ or $\mathbf{u} \subseteq V(F')$. For each $i \in [\ell]$ we define $e_i$ and $\overline{e}_i$ as the number of edges $e \in F'$ such that $u_i \in e$ and $e \subseteq \{u_1, \ldots, u_{i-1}\}$ and $e \subseteq \{u_{i + 1}, \ldots, u_\ell\}$, respectively. 
	Furthermore, for a vertex $v \in F'$ let $i(v)$ denote its index with respect to the ordering $\{u_1, \ldots, u_\ell\}$, i.e. $v \equiv u_{i(v)}$. Finally, let $\{v_1, \ldots, v_s\}$ be an ordering of $V(F')$ such that $i(v_j) < i(v_{j+1})$ for all $j \in [s-1]$. 
	We use the following observation,
	\begin{equation} \label{eq:E_F_sum}
		e(F') = \sum_{j = 2}^{v(F)} e_{i(v_j)} = \sum_{j = 1}^{v(F) - 1} \overline{e}_{i(v_j)}.
	\end{equation}
					
	Let us first consider the case $\mathbf{u} \cap V(F') = \emptyset$. 
	From \eqref{eq:E_F_sum} we have
	$$
		\frac{e(F')}{v(F') - 1} \le \frac{k (v(F') - 1)}{v(F') - 1} = k.
	$$			
	Next, suppose $\mathbf{u} \subseteq V(F')$. If $v(F') \ge (\ell - 3k) / k$ then, again, from \eqref{eq:E_F_sum} we have
	$$
		\frac{e(F')}{v(F') - 2k} \le \frac{k (v(F') - 2k) + 2k^2}{v(F') - 2k} = k + \frac{2k^2}{v(F') - 2k} \le k + \frac{8k^3}{\log n}.
	$$ 
	In the last inequality we used $v(F') - 2k \ge v(F')/2$ and $\ell - 3k \ge \ell / 2$, which holds for sufficiently large $n$ (recall $\ell = \log n$).
	Finally, let us consider the case $v(F') < (\ell - 3k) / k$. Since $F'$ contains $\mathbf{u}$, we conclude that $\{u_1, \ldots, u_\ell\}$ and $\{u_{\ell - k +1}, \ldots, u_\ell\}$ must belong to different connected components of $F'$ as otherwise 
	$$
		v(F') \ge \lfloor (\ell - 2k) / k \rfloor > (\ell - 3k) / k,
	$$
	which contradicts our assumption. Let $F'_1$ be the connected component containing $\{u_1, \ldots, u_\ell\}$ and $F'_2$ the union of all the other components. As $e_i = 0$ for all $1 \le i \le k$ and $\overline{e}_i = 0$ for all $\ell - k + 1 \le i \le \ell$, from \eqref{eq:E_F_sum} we conclude $e(F_1') \le k ( v(F_1') - k)$ and $e(F_2') \le k (v(F_2') - k)$. Therefore,
	$$
		\frac{e(F')}{v(F') - 2k} = \frac{e(F_1') + e(F_2')}{v(F') - 2k} \le \frac{ k (v(F'_1) + v(F'_2) - 2k)}{v(F') - 2k} = k,
	$$
	and we conclude $m(F, \mathbf{u}) \le k + 8k^3 / \log n$.
\end{proof}

	

The following corollary is tailored for the proof of Theorem \ref{thm:main_hypergraph}. The statement is exactly the same as in Corollary \ref{cor:connecting} with $k$-connecting-paths replaced by $k$-tight-paths.

\begin{corollary}[Hypergraph Connecting Lemma] \label{cor:connecting_hyper}
	Given an integer $k \ge 2$ and a subset $W \subseteq [n]$ of size $|W| \ge n / \log^3 n$, there exists a constant $C = C(k) > 0$ such that if 
	$$
	p \ge \frac{C \log^6 n}{|W|}
	$$
	then $G \sim \Gknpp$ a.a.s has the following property: for every family of disjoint $2k$-tuples $\{(\mathbf{a}_i, \mathbf{b}_i) \subseteq ([n] \setminus W)^{2k}\}_{i \in [t]}$ of size $t \le |W| / 4 \log n$, there exists a family $\{P_i \subseteq G\}_{i \in [t]}$ of vertex disjoint $(k, \log n)$-tight-paths such that
	\begin{enumerate}[(i)]
		\item $P_i$ is a $(k, \log n)$-tight-path from $\mathbf{a}_i$ to $\mathbf{b}_i$, and
		\item $V(P_i) \setminus \{\mathbf{a}_i \cup \mathbf{b}_i\} \subseteq W$.
	\end{enumerate}
	Moreover, such $k$-tight-paths can be found in $n^{O(\log n)}$-time.
\end{corollary}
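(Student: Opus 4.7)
The plan is to adapt the proof of Corollary \ref{cor:connecting} essentially verbatim, replacing the $(k,\log n)$-connecting-path $\Pkl$ by the $(k,\log n)$-tight-path $\Hkl$. A subgraph $P_i \subseteq G$ is a $(k,\log n)$-tight-path from $\mathbf{a}_i$ to $\mathbf{b}_i$ if and only if it is an $(\Hkl, \mathbf{u}, (\mathbf{a}_i, \mathbf{b}_i))$-copy, where $\mathbf{u} = (u_1, \ldots, u_k, u_{\ell-k+1}, \ldots, u_\ell)$ and $\ell = \log n$. Every edge of $\Hkl$ consists of $k+1$ consecutive vertices, so no edge is contained inside either $\{u_1,\ldots,u_k\}$ or $\{u_{\ell-k+1},\ldots,u_\ell\}$; hence $\mathbf{u}$ is an independent tuple in $\Hkl$ and Lemma \ref{lemma:connecting} applies. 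Since $v(\Hkl) = \log n$, once we establish the density bound $m(\Hkl,\mathbf{u}) \le 1 + O(k/\log n)$, an exponent-perturbation identical to \eqref{eq:plug_m} converts the Connecting Lemma's threshold into the stated $p \ge C \log^6 n / |W|$, and the algorithmic claim transfers directly.

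The main (and essentially only) technical step is the density bound itself. Given any nonempty $F' \subseteq \Hkl$ with either $\mathbf{u} \cap V(F') = \emptyset$ or $\mathbf{u} \subseteq V(F')$, I would identify $F'$ with its set $I \subseteq \{1,\ldots,\ell - k\}$ of edge indices and partition $I$ into components under the relation ``$i \sim i'$ iff $|i - i'| \le k$''. A component $I_t$ with smallest element $a_t$ and largest element $b_t$ induces exactly the vertex interval $\{u_{a_t},\ldots,u_{b_t + k}\}$ of size $b_t - a_t + k + 1 \ge |I_t| + k$, and vertex intervals of distinct components are disjoint because any two elements of $I$ in different components must differ by at least $k+1$. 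Summing over components yields $e(F') \le v(F') - rk$, where $r$ is the number of components.

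It remains to plug this into the two cases of Definition \ref{def:density}. In the disjoint case $\mathbf{u} \cap V(F') = \emptyset$, we have $r \ge 1$ and $e(F')/(v(F') - 1) \le (v(F') - k)/(v(F') - 1) \le 1$. In the containment case $\mathbf{u} \subseteq V(F')$, the vertex $u_1$ lies only in the edge indexed by $1$ and $u_\ell$ only in the edge indexed by $\ell - k$, so both of these edges are forced into $F'$; if they lie in different components we get $r \ge 2$ and hence $e(F')/(v(F') - 2k) \le 1$, whereas if they lie in the same component then that component's vertex interval must be all of $\{u_1,\ldots,u_\ell\}$, forcing $v(F') = \ell$, $e(F') \le \ell - k$, and so $e(F')/(v(F') - 2k) \le (\ell - k)/(\ell - 2k) \le 1 + 2k/\log n$ for $n$ sufficiently large. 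Combining yields $m(\Hkl,\mathbf{u}) \le 1 + 2k/\log n$, as required. I do not anticipate any obstacle beyond this density calculation; it is in fact cleaner than its counterpart for $\Pkl$, since the edges of $\Hkl$ already avoid both endpoint blocks and no analogue of the intra-endpoint edge removal in Corollary \ref{cor:connecting} is needed.
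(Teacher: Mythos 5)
Your overall route is exactly the paper's: the paper's proof of Corollary \ref{cor:connecting_hyper} simply observes that $P_i$ is an $(\Hkl,\mathbf{u},(\mathbf{a}_i,\mathbf{b}_i))$-copy, asserts $m(\Hkl,\mathbf{u})\le 1+8k^2/\log n$ ``following the same argument'' as for $\Pkl$, and plugs this into Lemma \ref{lemma:connecting} via the computation \eqref{eq:plug_m}. Your reduction, the independence of $\mathbf{u}$ in $\Hkl$, and the exponent perturbation are all fine, and your edge-index/component bookkeeping giving $e(F')\le v(F')-rk$ is a clean substitute for the paper's telescoping count \eqref{eq:E_F_sum}.

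There is, however, one step in your density calculation that is simply false: in the case $\mathbf{u}\subseteq V(F')$ you claim that since $u_1$ lies only in the edge indexed $1$ and $u_\ell$ only in the edge indexed $\ell-k$, ``both of these edges are forced into $F'$.'' A subgraph $F'\subseteq\Hkl$ need not contain any particular edge incident to its vertices; for instance $F'$ could consist of the $2k$ vertices of $\mathbf{u}$ together with the single edge $\{u_{k+1},\dots,u_{2k+1}\}$ and its vertices, so that $u_1$ and $u_\ell$ are isolated in $F'$. Consequently your dichotomy (``the two edges lie in the same component'' versus ``in different components'') does not exhaust all cases, and the unhandled cases are precisely those where $r=1$ but neither forced conclusion ($r\ge 2$, or the component's interval being all of $[\ell]$) holds. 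The bound survives, but you must account for the vertices of $\mathbf{u}$ not covered by any edge of $F'$: writing $u$ for their number, your component estimate gives $e(F')\le v(F')-u-rk$, and one checks that either $u+rk\ge 2k$ (whenever no single component's interval meets both end-blocks), or else a single interval stretches from within $\{u_1,\dots,u_k\}$ to within $\{u_{\ell-k+1},\dots,u_\ell\}$, forcing $v(F')\ge \ell-2k+2$ and hence a ratio of at most $1+O(k/\ell)$. With that repair (which is how the paper's argument, phrased in terms of connected components of $F'$ itself rather than of the edge-index set, avoids the issue), your proof goes through.
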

\begin{proof}
	Note that a subgraph $P_i \subseteq G$ is a $(k, \log n)$-tight-path from $\mathbf{a}_i$ to $\mathbf{b}_i$ if and only if it is a $(\Hkl, \mathbf{u}, (\mathbf{a}_i, \mathbf{b}_i))$-copy, where $\mathbf{u} = (u_1, \ldots, u_k, u_{\ell - k + 1}, \ldots, u_\ell)$ and $\ell = \log n$ (see Section \ref{sec:definitions}). Therefore, as in the proof of Corollary \ref{cor:connecting}, it suffices to show that for every family $\mathcal{Y} = \{(\mathbf{a}_i, \mathbf{b}_i)\}_{i \in [t]}$ satisfying conditions of the corollary there exists an $(\Hkl, \mathbf{u}, \mathcal{Y})$-matching with all internal vertices being in $W$. Following the same argument as in the proof of Corollary \ref{cor:connecting}, we obtain $m(\Hkl, \mathbf{u}) \le 1 + 8 k^2 / \log n$. Therefore, Lemma \ref{lemma:connecting} tells us that the desired property a.a.s holds for 
	$$
		p = \Omega(\log^6 n / |W|)^{1 / m(\Hkl, \mathbf{u})} = \Omega( \log^6 n / |W| ).
	$$
	(See \eqref{eq:plug_m} for details of the calculation).
\end{proof}

\subsection{Proof of Claim \ref{claim:expansion_F_matching}}
\label{sec:proof_of_expansion}

We use lower tail estimates for random
variables which count the number of copies of certain graphs in a
random graph. The following version of
Janson's inequality, tailored for graphs, will suffice. The
statement follows immediately from Theorems $8.1.1$ and $8.1.2$ in
\cite{alon2004probabilistic}.

\begin{theorem}[Janson's inequality] \label{thm:Janson}
	Let $k \ge 2$ be an integer, $p = p(n) \in (0, 1]$ and consider a family $\{ H_i \}_{i \in
		\mathcal{I}}$ of subgraphs of the complete (hyper)graph on the vertex set
	$[n]$. Let $G \sim \Gknp$ and, for each $i \in \mathcal{I}$, let $X_i$
	denote the indicator random variable for the event that $H_i \subseteq
	G$ and, for each ordered pair $(i, j) \in \mathcal{I} \times
	\mathcal{I}$ with $i \neq j$, write $H_i \sim H_j$ if $E(H_i)\cap
	E(H_j)\neq \emptyset$. Then, for
	\begin{align*}
		X      & = \sum_{i \in \mathcal{I}} X_i,                             \\
		\mu    & = \mathbb{E}[X] = \sum_{i \in \mathcal{I}} p^{e(H_i)},      \\
		\delta & = \sum_{\substack{(i, j) \in \mathcal{I} \times \mathcal{I} \\ H_i \sim H_j}} \mathbb{E}[X_i X_j] = \sum_{\substack{(i, j) \in \mathcal{I} \times \mathcal{I} \\ H_i \sim H_j}} p^{e(H_i) + e(H_j) - e(H_i \cap H_j)}
	\end{align*}
	and any $0 < \gamma < 1$, we have
	$$ \Pr[X < (1 - \gamma)\mu] \le e^{- \frac{\gamma^2 \mu^2}{2(\mu + \delta)}}. $$
\end{theorem}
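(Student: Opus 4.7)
Fix a valid triple $(t, \mathcal{Y}, D)$: an integer $t$ with $t(v(F)-r) \le |S|/2$, a family $\mathcal{Y} = \{\mathbf{y}_i\}_{i \in [t]}$ of disjoint $r$-tuples in $V(G) \setminus S$, and $D \subseteq S$ with $|D| \le t(v(F)-r)$. The plan is to apply Janson's inequality (Theorem \ref{thm:Janson}) to a counting variable $X$ designed so that $X > 0$ encodes the desired conclusion for this triple, and then to take a union bound over all valid $(t, \mathcal{Y}, D)$. Precisely, let $\eta = (i, \phi)$ range over pairs with $i \in [t]$ and $\phi \colon V(F) \setminus \mathbf{x} \to S \setminus D$ an injection, let $H_\eta$ be the subgraph of the complete $k$-graph on $[n]$ obtained by extending $\phi$ with $\mathbf{x} \mapsto \mathbf{y}_i$, set $X_\eta = \mathbb{1}[H_\eta \subseteq G]$, and $X = \sum_\eta X_\eta$.

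For the mean, since $|S \setminus D| \ge |S|/2$,
\[
\mu = \mathbb{E}[X] = t \cdot (|S \setminus D|)_{v(F)-r} \cdot p^{e(F)} \ge t \cdot (|S|/4)^{v(F)-r} \cdot p^{e(F)}.
\]
Plugging $F' = F$ into the definition of $m(F, \mathbf{x})$ gives $e(F) \le (v(F) - r) \cdot m(F, \mathbf{x})$ when $r \ge 1$, and a slightly stronger bound when $r = 0$. Combined with the hypothesis $|S| \cdot p^{m(F,\mathbf{x})} \ge C v(F)^4 \log n$, this yields $\mu \ge t \cdot (C v(F)^4 \log n / 4)^{v(F)-r}$.

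The key structural observation for $\delta$ is the following: if two distinct ordered embeddings $\eta = (i, \phi)$ and $\eta' = (j, \psi)$ produce images that share an edge, then their intersection corresponds to a subgraph $F' \subseteq F$ satisfying either $\mathbf{x} \subseteq V(F')$ (when $i = j$, since the roots coincide) or $V(F') \cap \mathbf{x} = \emptyset$ (when $i \neq j$, since $\mathbf{y}_i \cap \mathbf{y}_j = \emptyset$). These are exactly the two regimes permitted in Definition \ref{def:density}, so the hypothesis on $p$ supplies the lower bound
\[
|S|^{v(F') - \max\{1, |V(F') \cap \mathbf{x}|\}} \cdot p^{e(F')} \ge C v(F)^4 \log n
\]
for every relevant $F'$ with $e(F') \ge 1$. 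A standard parameterization of overlapping pairs (fix $\eta$; choose the isomorphism type of $F'$; choose $\eta'$ consistent with the overlap, with an additional factor of $t$ in the different-root case and none in the same-root case) converts these into a bound of the form $\delta \le \mu^2 / (C'' t v(F) \log n)$ for any prescribed $C''$, provided $C$ is chosen large enough in terms of $C''$, $k$, and $r$.

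Janson's inequality with $\gamma = 1$ then yields $\Pr[X = 0] \le \exp(-\Omega(\mu^2/(\mu + \delta))) \le \exp(-\Omega(C'' t v(F) \log n))$. The number of valid triples $(t, \mathcal{Y}, D)$ is at most $n \cdot n^{tr} \cdot n^{t(v(F)-r)} = n^{1 + t v(F)}$, so choosing $C''$ (hence $C$) sufficiently large in terms of $K$ makes the union-bounded failure probability at most $n^{-K}$. The principal technical difficulty is controlling the combinatorial prefactors in the $\delta$-estimate: naive enumeration of subgraphs $F' \subseteq F$ and of their embeddings into $F$ can generate factors exponential in $v(F)$ (for instance, $2^{e(F)}$), which would overwhelm the modest $v(F)^4$ slack in the hypothesis. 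One must charge each overlapping pair to a single subgraph (the actual intersection) rather than summing over every containing $F'$, so that the prefactor stays polynomial in $v(F)$ and can be absorbed into the choice of $C$.
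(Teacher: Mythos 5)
Your proposal does not prove the statement in question. The statement is Janson's inequality itself (Theorem \ref{thm:Janson}): the probabilistic lower-tail bound $\Pr[X < (1-\gamma)\mu] \le e^{-\gamma^2\mu^2/(2(\mu+\delta))}$ for sums of indicator variables of subgraph containment. What you have written is instead an outline of the proof of Claim \ref{claim:expansion_F_matching}, the downstream application in which Janson's inequality is invoked as a black box --- indeed, your first paragraph explicitly says ``apply Janson's inequality (Theorem \ref{thm:Janson})'', which is circular if the goal is to establish that theorem. Nowhere do you carry out (or even gesture at) the correlation-inequality argument that the tail bound actually requires, e.g.\ the exponential-moment/Laplace-transform computation together with the FKG or Harris inequality, or the Boppana--Spencer induction on conditional probabilities. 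The paper itself does not prove Theorem \ref{thm:Janson} either; it derives the statement directly from Theorems 8.1.1 and 8.1.2 of Alon--Spencer, so a correct submission would either reproduce that derivation or give a self-contained proof of the tail bound.

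As a secondary point, even read as a proof of Claim \ref{claim:expansion_F_matching}, your sketch invokes ``Janson's inequality with $\gamma = 1$'', which is outside the stated range $0 < \gamma < 1$; the paper instead uses $\Pr[X=0] \le \Pr[X \le \mu/2]$ and applies the bound with $\gamma = 1/2$. But the essential issue is the mismatch of target: you need to supply a proof of the inequality itself, not of an application of it.
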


The proof of Claim \ref{claim:expansion_F_matching} follows a straightforward but tedious argument using Janson's inequality.

\begin{proof}[Proof of Claim \ref{claim:expansion_F_matching}]
					
	Let $V(F) \setminus \mathbf{x} = \{u_1, \ldots, u_m\}$ be an arbitrary labeling of the vertices in $F \setminus \mathbf{x}$, where $m = v(F) - r$. In order to prove the claim it suffices to show that for a particular:
	\begin{itemize}
		\item $t \in \mathbb{N}$ such that $t (v(F) - r) n \le |S| / 2$,
		\item a subset $D \subseteq S$ of size $|D| =  t (v(F) - r)$, and 
		\item a family $\mathcal{Y} = \{\mathbf{y}_i \in (V(G) \setminus S)^r \}_{i \in [t]}$ of disjoint $r$-tuples, 	
	\end{itemize}
	with probability at least $1 - 2^{- (K+2)  t \cdot v(F) \log n}$ the random (hyper)graph $G \sim \Gknp$ contains a $(F, \mathbf{x}, \mathbf{y}_i)$-copy $F_i$, for some $i \in [t]$, such that $V(F_i) \setminus \mathbf{y}_i \subseteq S \setminus D$. Indeed, as there are at most $n$ choices for $t$, and for each $t$ at most 
	$$
	\binom{|S|}{|D|} = \binom{|S|}{t(v(F) - r)} \le 2^{t \cdot (v(F) - r) \log n}
	$$
	choices for $D$ and at most $n^{r t} \le 2^{r t \log n}$ choices for the family $\mathcal{Y}$, a union-bound over all such choices implies that the claim holds with probability at least
	$$
	1 - n 2^{ t \cdot v(F) \log n - (K + 2) t \cdot v(F) \log n} > 1 - n^{-K}.
	$$

	In the rest of the proof we show the desired probability for some chosen $t$, $D$ and $\mathcal{Y}$ as stated above. First, observe that we always have $|S \setminus D| \ge |S|/2$ and set	$S' := S \setminus D$. For each $i \in [t]$, let $\mathcal{F}_i$ denote the family of all \emph{valid lexicographical} $(F, \mathbf{x}, \mathbf{y}_i)$-copies in $K_n^{(k)}$, that is, $(F, \mathbf{x}, \mathbf{y}_i)$-copies $F' \subseteq K_n^{(k)}$ such that 
	\begin{enumerate}	
		\item $V(F') \setminus \mathbf{y}_i \subseteq S'$, and
		\item the unique function $f\colon V(F) \rightarrow V(F')$ given by $f(\mathbf{x}) = f(\mathbf{y}_i)$ and $f(u_j) < f(u_{j+1})$ for all $1 \le j \le v(F) - r - 1$ is an isomorphism between $F$ and $F'$.
	\end{enumerate}
	Moreover, let $\mathcal{F} = \bigcup_{i \in [s]} \mathcal{F}_i$. Note that for $\mathbf{x} = \emptyset$ we have $\mathcal{F}_i = \mathcal{F}_j$, thus each $F' \in \mathcal{F}$ appears $t$ times in $\mathcal{F}$. This, however, plays no role in the proof.
					
	We aim to apply Janson's inequality to deduce that the actual number of (hyper)graphs $F' \in \mathcal{F}$ that appear in $G$ is zero with sufficiently low probability. Recall the parameters associated with Janson's inequality,
	\begin{align*}
		X      & = \sum_{F' \in \mathcal{F}} X_{F'},                                                                                   \\
		\mu    & = \mathbb{E}[X] = \sum_{F' \in \mathcal{\mathcal{F}}} p^{e(F)},                                                       \\
		\delta & = \sum_{F' \sim F''}^{\mathcal{F}} \mathbb{E}[X_i X_j] = \sum_{F' \sim F''}^{\mathcal{F}} p^{2e(F) - e(F' \cap F'')}, 
	\end{align*}
	where $X_{F'}$ is an indicator random variable for $F' \subseteq G$ and $\sum^{\mathcal{F}}$ denotes that the sum runs over pairs of elements of $\mathcal{F}$. By  Janson's inequality we have
	$$
	\Pr[X = 0] \le \Pr[X \le \mu /2 ] \le e^{- \frac{\mu^2}{8 (\mu + \delta)}},
	$$
	thus it suffices to show 
	$$
	\mu \ge C' t \cdot v(F) \log n \qquad \text{ and }\qquad \delta \le \mu^2 / (C' t \cdot v(F) \log n)
	$$
	for some sufficiently large constant $C' = C'(K)$. 
					
	Note that every subset $Q \subseteq S'$ of size $v(F) - r$ uniquely determines a member of $\mathcal{F}_i$, for each $i \in [t]$. Thus $|\mathcal{F}| = t \binom{|S'|}{v(F) - r}$ and 
	\begin{multline*}
		\mu = |\mathcal{F}| p^{e(F)} = t \binom{|S'|}{v(F) - r} p^{e(F)} \stackrel{(*)}{\ge} t \left( \frac{|S|}{2 (v(F) - r)} \right)^{v(F) - r} p^{(v(F) - r) m(F, \mathbf{x})} \\ \ge t \left( \frac{|S|}{2 v(F)} p^{m(F, \mathbf{x})} \right)^{v(F) - r}  \ge C' t \cdot v(F) \log n,
	\end{multline*}
	as required. In $(*)$ we used $|S'| \ge |S|/2$ and $m(F, \mathbf{x}) \ge e(F) / (v(F) - r)$ (note that this is true even in $r = 0$). The last inequality follows from $v(F) - r \ge 2$ and
	$$
	p = \Omega \left( \frac{v(F)^2 \log n}{|S|} \right)^{1/m(F, \mathbf{x})}.
	$$

	Next, we estimate $\delta$ by splitting the sum into two, depending on the intersection of $V(F' \cap F'')$ and $Y:= \sum_{i \in [t]} \mathbf{y}_i$,
	$$
	\delta =\sum_{\substack{F' \sim F''\\V(F'\cap F'') \cap Y = \emptyset}}^{\mathcal{F}} p^{2 e(F) - e(F' \cap F'')} + \sum_{\substack{F' \sim F''\\V(F'\cap F'') \cap Y \neq \emptyset}}^{\mathcal{F}} p^{2 e(F) - e(F' \cap F'')}.
	$$
	We denote the first sum with $\delta_1$ and the second with $\delta_2$. We start by estimating $\delta_1$. Note that if $V(F' \cap F'') \cap Y = \emptyset$ then $\mathbf{x} \cap V(F' \cap F'') = \emptyset$ (here we slightly abuse the notation by identifying $F' \cap F''$ with the corresponding subgraph of $F$), thus
	$$
	\frac{e(F' \cap F'')}{v(F' \cap F'') - 1} \le m(F, \mathbf{x}).
	$$
	Therefore, we can upper bound $\delta_1$ as
	$$
	\delta_1 \le \sum_{\substack{F' \sim F''\\V(F'\cap F'') \cap Y = \emptyset}}^{\mathcal{F}} p^{2 e(F)} p^{-(v(F' \cap F'') - 1) m(F, \mathbf{x})}.	
	$$
	As we are concerned only with lexicographical copies, the vertex set of $F'$ and $F''$ uniquely determines the edge set of $F'$ and $F''$, respectively. Thus we iterate over copies $F', F''$ by first choosing $j \ge 2$ (the number of vertices in $V(F' \cap F'')$; note that if $F'$ and $F''$ have only one vertex in common then they do not share edges and, consequently, $F' \not \sim F''$), then picking $j$ vertices from $S'$, two sets of $v(F) - r - j$ vertices from the remaining vertices in $S'$ and two $r$-tuples from $\mathcal{Y}$,
	\begin{align*}
		\delta_1 & \le \sum_{j=2}^{v(F) - r} \binom{|S'|}{j}  \left(t \binom{|S'|-j}{v(F) - r - j}\right)^2 p^{2e(F)} p^{-(j-1)m(F, \mathbf{x})}                                    \\
		         & = \sum_{j=2}^{v(F) - r} \binom{|S'|}{j}  \left(t \binom{|S'|}{v(F) - r} \binom{ v(F) - r}{j} \binom{|S'|}{j}^{-1}\right)^2  p^{2e(F)} p^{-(j-1)m(F, \mathbf{x})} \\
		         & = \mu^2 \sum_{j=2}^{v(F) - r} \binom{|S'|}{j}^{-1}  \binom{v(F) - r}{j}^2   p^{-(j-1)m(F, \mathbf{x})}                                                           
	\end{align*}
	where in the last step we used $\mu = t \binom{|S'|}{v(F) - r}p^{e(F)}$. We further simplify the last expression by using simple algebraic manipulation and standard estimates for binomial coefficients,
	\begin{multline*}
		\delta_1 \le \mu^2 \sum_{j=2}^{v(F) - r} \left (\frac{e^2 v(F)^2}{|S'|} \right)^j p^{-(j-1)m(F, \mathbf{x})}
		\le \mu^2 \sum_{j=2}^{v(F) - r} \left( \frac{2 e^2 v(F)^2}{|S|} p^{-m(F, \mathbf{x})} \right)^j p^{m(F, \mathbf{x})}  \\
		\le \mu^2 \cdot 2 \left( \frac{2e^2 v(F)^2}{|S|} p^{-m(F, \mathbf{x})} \right)^2 p^{m(F, \mathbf{x})}  \le \mu^2 \cdot \frac{8e^4 v(F)^4}{|S|^2} p^{-m(F, \mathbf{x})},
	\end{multline*}
	where in the penultimate inequality we used $\sum_{j \ge 2} a^j < 2a^2$ for $a < 1/2$ (which holds for our choice of $p$). Finally, from the assumption $|S| \ge t (v(F) - r) \ge t v(F) / (r + 1)$ we conclude
	$$
	\delta_1 \le \frac{\mu^2}{t \cdot v(F)} \cdot (r+1) \frac{8e^4 v(F)^4}{|S|} p^{-m(F, \mathbf{x})} \le \frac{\mu^2}{C' t \cdot v(F) \log n},
	$$
	which follows from
	$$
	p = \Omega_{r} \left( \frac{v(F)^4 \log n}{|S|} \right)^{1/m(F, \mathbf{x})}.
	$$
					
	Next, we analyse the case $V(F' \cap F'') \cap Y \neq \emptyset$. If $\mathbf{x} = \emptyset$ then clearly there are no such pairs as $Y = \emptyset$, and consequently $\delta_2 = 0$. Thus for the rest of the proof we assume $r > 0$. Note that then we necessarily have $F', F'' \in \mathcal{F}_i$ as the $r$-tuples in $\mathcal{Y}$ are disjoint, thus
	$$
	\delta_2 = \sum_{i \in [t]} \sum_{\substack{F' \sim F''\\V(F' \cap F'') \cap Y \neq \emptyset}}^{\mathcal{F}_i} p^{2e(F) - e(F' \cap F'')}.
	$$ 
	Consider some $F', F'' \in \mathcal{F}_i$ in the above sum and let $H = F' \cap F''$. Observe that $\mathbf{y}_i \subseteq H$, hence $j = |V(H) \cap S'| = v(H) - r > 0$. By the definition we have
	$$
	e(H) / (v(H) - r) \le m(F, \mathbf{x}),
	$$
	and therefore
	$$
	\delta_2 \le \sum_{i \in [t]} \sum_{\substack{F' \sim F''\\V(F' \cap F'') \cap Y \neq \emptyset}}^{\mathcal{F}_i} p^{2e(F)} p^{-j \cdot m(F, \mathbf{x})}.
	$$
	Furthermore, note that $j > 0$ as otherwise $e(H) = 0$ (follows from the assumption that $\mathbf{y}_i$ is an independent set in $F'$ and $F''$), which contradicts $F' \sim F''$. For each $i \in [t]$ we calculate the sum in the upper bound on $\delta_2$ associated with $i$ as follows: we first choose $j \ge 1$ (the number of vertices in $V(F' \cap F'') \cap S'$), then pick $j$ vertices from $S'$ and two sets of $v(F) - r - j$ vertices from the remaining vertices in $S'$. Note that each such choice gives a unique pair $F' \sim F''$ in $\mathcal{F}_i$. Therefore, we have
	\begin{align}
		\delta_2 & \le \sum_{i \in [t]} \sum_{j = 1}^{v(F) - r} \binom{|S'|}{j} \binom{|S'| - j}{v(F) - r - j}^2 p^{2e(F)} p^{-j \cdot m(F, \mathbf{x})} \nonumber \\
		         & \le t \sum_{j = 1}^{v(F) - r} \binom{|S'|}{j} \binom{|S'| - j}{v(F) - r - j}^2 p^{2e(F)} p^{-j \cdot m(F, \mathbf{x})}. \label{eq:plug_back}    
	\end{align}
	We simplify binomial coefficients similarly as in the calculation of $\delta_1$,
	\begin{align*}
		\binom{|S'|}{j} \binom{|S'| - j}{v(F) - r - j}^2 & = \binom{|S'|}{j} \left( \binom{|S'|}{v(F) - r} \binom{v(F) - r}{j} \binom{|S'|}{j}^{-1} \right)^2 \\
		                                                 & \le \binom{|S'|}{v(F) - r}^2 \left ( \frac{e^2 v(F)^2}{|S'|}\right )^{j}.                          
	\end{align*}
	Finally, by plugging the previous estimate back into \eqref{eq:plug_back} we get 
	\begin{multline*}
		\delta_2 \le t \sum_{j = 1}^{v(F) - r} \binom{|S'|}{v(F) - r}^2 \left ( \frac{e^2 v(F)^2}{|S'|}\right )^{j} p^{2e(F)} p^{-j \cdot m(F, \mathbf{x})} 
		\\
		\le \frac{\mu^2}{t} \sum_{j = 1}^{v(F) - r} \left( \frac{2e^2 v(F)^2}{|S|} p^{-m(F, \mathbf{x})} \right)^{j} 
		\le \frac{\mu^2}{t} \cdot 2 \left( \frac{2e^2 v(F)^2}{|S|} p^{-m(F, \mathbf{x})} \right)
		\le \frac{\mu^2}{C' t \cdot v(F) \log n}. 
	\end{multline*}
	In the penultimate inequality we used $\sum_{j \ge 1} a^j < 2a$ for $a < 1/2$ (which holds for our choice of $p$), and the last inequality follows from
	$$
	p = \Omega \left( \frac{v(F)^3 \log n}{|S|} \right)^{1/m(F, \mathbf{x})}.
	$$
\end{proof}

\section{Absorbers} \label{sec:absorbers}

In this section we formally define the notion of absorbers used in the proof of our two main theorems.

\begin{definition}[Absorber]
Let $k$ be an integer, let $A$ be a graph (hypergraph) and $\mathbf{a}, \mathbf{b} \in V(A)^k$ disjoint $k$-tuples of vertices of $A$. Given a subset $X \subseteq V(A)$, we say that $A$ is an \emph{$(\mathbf{a}, \mathbf{b}, X)$-absorber} if for every subset $X' \subseteq X$ there exists 
a $k$-path ($k$-tight-path) $P \subseteq A$ from $\mathbf{a}$ to $\mathbf{b}$ such that $V(P) = V(A) \setminus X'$.
\end{definition}

As mentioned in Section \ref{sec:overview_absorber}, our goal is to find an absorber in $\Gnp$ and $\Gknp$ for a large subset $X$. This is accomplished by the following lemma.

\begin{lemma} \label{lemma:absorbing_path}
  Given an integer $k \ge 2$, there exists $C > 0$ such that if
  \begin{enumerate}[(i)]
    \item  $G \sim \Gnp$ for $p^k \ge C \log^8 n/n$ or 
    \item  $G \sim \Gknpp$ for $p \ge C \log^8 n/ n$,
  \end{enumerate}  
  then $G$ a.a.s contains an $(\mathbf{a}, \mathbf{b}, X)$-absorber $A$ with at most $v(A) \le n/2$ vertices, where $X \subset V(G)$ is a subset of size $|X| = \lfloor n / 16\log^2 n \rfloor$ and $\mathbf{a}, \mathbf{b} \in (V(G) \setminus X)^k$ are disjoint $k$-tuples.
\end{lemma}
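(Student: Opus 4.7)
The plan is to follow the absorbing strategy sketched in Section \ref{sec:overview_absorber}: build a small ``elementary'' absorber for each individual vertex of $X$, then splice them together with long connecting paths produced by the Connecting Lemma machinery of Section \ref{sec:conn_lemma}.

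\medskip

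\noindent\emph{Step 1 (elementary absorber).} I construct a (hyper)graph $F$ of moderate order $v(F) = \Theta(\log n)$ with a distinguished vertex $v$ and two disjoint distinguished $k$-tuples $\mathbf{c}, \mathbf{d} \subseteq V(F) \setminus \{v\}$ of independent vertices, such that $F$ is an $(\mathbf{c}, \mathbf{d}, \{v\})$-absorber: $F$ contains both a $k$-path (resp.\ $k$-tight-path) from $\mathbf{c}$ to $\mathbf{d}$ covering all of $V(F)$, and one covering $V(F) \setminus \{v\}$. Concretely, $F$ is obtained from a $(k, \ell)$-path (resp.\ $(k, \ell)$-tight-path) on $\ell = \Theta(\log n)$ vertices by designating a central vertex as $v$ and adding the handful of ``bypass'' edges needed to form the $v$-free traversal on $V(F) \setminus \{v\}$. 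Crucially, the same density analysis used in the proof of Corollary \ref{cor:connecting} (counting shared and extra edges of the include-/skip-modes) shows that $m(F, (v)) \le k + O(\log\log n / \log n)$ in the graph case and $\le 1 + O(\log\log n / \log n)$ in the hypergraph case.

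\medskip

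\noindent\emph{Step 2 (placing one absorber at each $x \in X$).} Partition $V(G) \setminus (X \cup \mathbf{a} \cup \mathbf{b})$ into two sets $W_{\mathrm{abs}}$ and $W_{\mathrm{conn}}$ of sizes $\Theta(n)$. Apply Lemma \ref{lemma:connecting} to $F$ with $r = 1$, $\mathbf{x} = (v)$, $W = W_{\mathrm{abs}}$, and $\mathcal{Y} = \{(x)\}_{x \in X}$. With $v(F) = \Theta(\log n)$, the counting constraint $|X|(v(F) - 1) \le |W_{\mathrm{abs}}|/4$ holds easily, and the polylogarithmic slack between the hypothesis $p^k \ge C \log^8 n / n$ and the bound on $p$ required by the Connecting Lemma comfortably accommodates the $O(\log\log n / \log n)$-close-to-$k$ density of $F$ (the computation is exactly the one carried out around the estimate on $(\log^6 n / |W|)^{1/m(F, \mathbf{u})}$ in the proof of Corollary \ref{cor:connecting}). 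This produces a pairwise vertex-disjoint family $\{F_x\}_{x \in X}$ of copies of $F$, each with its distinguished vertex mapped to $x$ and all other vertices inside $W_{\mathrm{abs}}$; the resulting endpoint tuples will be denoted $\mathbf{c}_x, \mathbf{d}_x \subseteq W_{\mathrm{abs}}$.

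\medskip

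\noindent\emph{Step 3 (linking and assembly).} Order the absorbers arbitrarily as $F_{x_1}, \ldots, F_{x_{|X|}}$ and apply Corollary \ref{cor:connecting} (respectively Corollary \ref{cor:connecting_hyper}) with $W = W_{\mathrm{conn}}$ to the family of $2k$-tuples
\[
(\mathbf{a}, \mathbf{c}_{x_1}),\ (\mathbf{d}_{x_1}, \mathbf{c}_{x_2}),\ \ldots,\ (\mathbf{d}_{x_{|X|-1}}, \mathbf{c}_{x_{|X|}}),\ (\mathbf{d}_{x_{|X|}}, \mathbf{b}),
\]
of size $|X| + 1 \le |W_{\mathrm{conn}}|/(4 \log n)$. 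The corollary yields pairwise vertex-disjoint $(k, \log n)$-connecting-paths (resp.\ tight-paths) $Q_0, Q_1, \ldots, Q_{|X|}$ realising these connections, with internal vertices in $W_{\mathrm{conn}}$. Let $A$ be the union of $\mathbf{a}, \mathbf{b}$, all $F_{x_i}$ and all $Q_j$. A direct count gives
\[
v(A) \le 2k + |X|\, v(F) + (|X| + 1)\log n = O(n/\log n) \le n/2
\]
for sufficiently large $n$. Given any $X' \subseteq X$, choosing inside each $F_{x_i}$ the ``include $x_i$'' path when $x_i \notin X'$ and the ``skip $x_i$'' path when $x_i \in X'$, and splicing the results via $Q_0, \ldots, Q_{|X|}$, produces a $k$-path (resp.\ $k$-tight-path) from $\mathbf{a}$ to $\mathbf{b}$ with vertex set precisely $V(A) \setminus X'$. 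Thus $A$ is an $(\mathbf{a}, \mathbf{b}, X)$-absorber, as required.

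\medskip

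\noindent\emph{Main obstacle.} The substantive part is Step 1: arranging the gadget $F$ so that $m(F, (v))$ lies within $O(\log\log n / \log n)$ of $k$ (resp.\ $1$), despite the fact that the $v$-free traversal forces a handful of extra edges. For $k \ge 3$ and in the hypergraph setting the straightforward construction already delivers this bound. For $k = 2$, however, the na\"ive bypass creates a five-vertex sub-structure of density $9/4$, which exceeds the threshold dictated by $p^k = C\log^8 n / n$ by a constant; one therefore needs a more refined absorber design (along the lines of those used in K\"uhn--Osthus \cite{KuOs12} and Ferber et al.\ \cite{ferber2015robust}) that spreads this dense core across $\Theta(\log n)$ vertices. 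Once $F$ is in place, the remainder of the proof is essentially routine bookkeeping with the lemmas proved in Section \ref{sec:conn_lemma}.
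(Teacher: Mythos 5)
Your architecture matches the paper's: build pairwise disjoint single-vertex absorbers, link them with connecting paths supplied by Corollary \ref{cor:connecting} (resp.\ Corollary \ref{cor:connecting_hyper}), and check that the union absorbs every $X' \subseteq X$. Steps 2 and 3 are essentially the bookkeeping the paper carries out. One minor structural difference: since the lemma only asserts the existence of \emph{some} $X$, the paper embeds \emph{unrooted} copies of its gadget via the $r=0$ case of the Connecting Lemma (Corollary \ref{cor:factor}, with the $m_1$-density) and then \emph{defines} $X$ as the set of distinguished vertices of the copies it happens to find; your rooted $r=1$ variant is not wrong --- note that $m(F,(v)) = m_1(F)$ when $|\mathbf{x}|=1$, by Definition \ref{def:density} --- but it proves more than is needed.

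The genuine gap is Step 1, and you have flagged it yourself without resolving it. For $k=2$ the na\"ive bypass around the special vertex yields a $5$-vertex subgraph with $9$ edges, so $m_1 \ge 9/4 > k$, and the Connecting Lemma would then only apply for $p \gtrsim n^{-4/9}$, which misses the claimed $n^{-1/2}$ range by a polynomial factor; and $k=2$ is precisely the headline case of Theorem \ref{thm:main_graph}. Saying that one needs a design which ``spreads the dense core over $\Theta(\log n)$ vertices'' states the problem rather than solving it. The paper's resolution is the backbone graph $\Jkl$ (Section \ref{sec:absorbers}): $\ell$ blocks $\mathbf{w}_i^a\mathbf{w}_i^b$ of $2k$ vertices each, plus the special vertex $x$ attached only to the first block, arranged so that the traversal containing $x$ runs through consecutive blocks while the traversal avoiding $x$ uses the ``skip one block'' paths $(\mathbf{w}_{i+2}^a,\mathbf{w}_i^b)$; the long connectors $U_i$ between blocks are supplied separately by the Connecting Lemma, so the two modes never concentrate extra edges on a small vertex set. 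Claim \ref{claim:abs_dens} then certifies $m_1(\Jkl)\le k$ (and $m_1(\HJkl)\le 1$) by exhibiting an explicit $k$-degenerate ordering, and Claim \ref{claim:absorber} certifies the two traversals. Until you exhibit such a gadget and verify its density, the argument does not close for $k=2$.
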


In the rest of this section we prove Lemma \ref{lemma:absorbing_path}.

\subsection{Proof of Lemma \ref{lemma:absorbing_path}}

Let $X = \{x_1, \ldots, x_m\} \subseteq V(G)$ be a subset of vertices. Our strategy for constructing an absorber for $X$ consists of two steps. In the first step we find an $(\mathbf{a}_i, \mathbf{b}_i, \{x_i\})$-absorber $A_i$ (a \emph{single vertex} absorber) for each $x_i \in X$, such that they are pairwise disjoint. In the second step we find a $k$-connecting-path from $\mathbf{b}_i$ to $\mathbf{a}_{i+1}$, for every $1 \le i \le m - 1$, such that they are pairwise disjoint and also disjoint from $A_i$'s. It is easy to see that this gives an $(\mathbf{a}_1, \mathbf{b}_m, X)$-absorber: given $X' \subseteq X$, for every $x_i \in X$ we choose a $k$-path in $A_i$ depending on whether $x_i \in X'$ or not. We apply the similar strategy in the hypergraph case, with $k$-tight-paths replacing both $k$-paths and $k$-connecting-paths.

Having Corollaries \ref{cor:connecting} and \ref{cor:connecting_hyper} at hand, once we find $(\mathbf{a}_i, \mathbf{b}_i, \{x_i\})$-absorbers we can easily `connect' them into a $(\mathbf{a}_1, \mathbf{b}_m, X)$-absorber. Therefore, the real challenge is to find many single-vertex absorbers. We now define the main building block for constructing such absorbers, the family of graphs $\Jkl$ and hypergraphs $\HJkl$. Let 
$$
	\Wkl = \{x\} \cup \bigcup_{i \in [\ell]} \{w_{i,1}, \ldots, w_{i,2k}\}
$$
and set 
\begin{align*}
	\mathbf{w}_i^a &= (w_{i, 1}, \ldots, w_{i, k}), \\
	\mathbf{w}_i^b &= (w_{i, k + 1}, \ldots, w_{i, 2k})
\end{align*}
for every $i \in [\ell]$.

\begin{itemize}
	\item `Backbone' graph $\Jkl$ is a graph on the vertex set $W_\ell^k$ and the edge set given by the union of following graphs (see Figure \ref{fig:jumping_jack}): 
	\begin{itemize}
		\item the $(k, 2k + 1)$-path $(\mathbf{w}_1^a, x, \mathbf{w}_1^b)$;
		\item the $(k, 2k)$-path $(\mathbf{w}_i^a, \mathbf{w}_i^b)$ for every $2 \le i \le \ell$;		
		\item the $(k, 2k)$-path $(\mathbf{w}_2^a, \overline{\mathbf{w}}_1^a)$;
		\item the $(k, 2k)$-path $(\mathbf{w}_{i+2}^a, \mathbf{w}_{i}^b)$ for every $1 \le i \le \ell - 2$; 
		\item the $(k, 2k)$-path $(\overline{\mathbf{w}}_{\ell}^b, \mathbf{w}_{\ell-1}^b)$,
	\end{itemize}

	\begin{figure}[!ht] 
		\label{fig:jumping_jack}		
		\centering
		\begin{tikzpicture}[scale = 0.6]
			\tikzstyle{blob} = [fill=black,circle,inner sep=1.5pt,minimum size=0.5pt]
			
			\foreach \a in {2,4}{
				\foreach \i in {1,...,4}{				
					\node[blob] (\a\i) at ({((\a * 3 + \i - 1)*360/30 - 288}:6) {};
					\node at ({(\a * 3 + \i - 1)*360/30 - 288}:7) {$w_{\a\i}$};				
				}

				\draw (\a1) to (\a2);
				\draw[bend right] (\a1) to (\a3);			
				\draw (\a2) to (\a3);
				\draw[bend right] (\a2) to (\a4);
				\draw (\a3) to (\a4);
			}

			\foreach \i in {1,...,4}
			{	
				\node[blob] (5\i) at ({(4 * 6  - \i - 2)*360/30 - 288}:6) {};			
				\node at ({(4 * 6  - \i - 2)*360/30 - 288}:7) {$w_{5\i}$};	
			}
			\draw (51) to (52);
			\draw[bend left] (51) to (53);			
			\draw (52) to (53);
			\draw[bend left] (52) to (54);
			\draw (53) to (54);

			\foreach \i in {1,...,4}
			{	
				\node[blob] (3\i) at ({5 * 6  - \i - 2)*360/30 - 288}:6) {};			
				\node at  ({(5 * 6  - \i - 2)*360/30 - 288}:7) {$w_{3\i}$};			
			}		
			\draw (31) to (32);
			\draw[bend left] (31) to (33);			
			\draw (32) to (33);
			\draw[bend left] (32) to (34);
			\draw (33) to (34);

			\node[blob] (04) at ({0 - 295}:6) {}; \node at ({0 - 297}:6.5) {$w_{14}$};
			\node[blob] (03) at ({1 * 360/30 - 295}:6) {}; \node at ({1 * 360/30 - 297}:6.5) {$w_{13}$};
			\node[blob] (02) at ({2 * 360/30 - 281}:6) {}; \node at ({2 * 360/30 - 279}:6.5) {$w_{12}$};
			\node[blob] (01) at ({3 * 360/30 - 281}:6) {}; \node at ({3 * 360/30 - 278}:6.6) {$w_{11}$};

			\node[blob] (x) at ({360/20 - 288}:8) {}; \node at ({360/20 - 288}:8.5) {$x$};

			\draw (01) to (02); \draw (02) to (x); \draw (x) to (03); \draw (03) to (04); \draw (02) to (03);
			\draw[bend left] (01) to (x); \draw[bend left] (x) to (04);

			\draw[bend left] (02) to (22); \draw[bend left] (01) to (22); \draw[bend left] (02) to (21);
			\draw[bend right] (04) to (32); \draw[bend right] (03) to (32); \draw[bend right] (03) to (31);
			\draw[bend left] (24) to (42); \draw[bend left] (23) to (42); \draw[bend left] (23) to (41);
			\draw[bend right] (34) to (52); \draw[bend right] (33) to (52); \draw[bend right] (33) to (51);

			\draw[bend left] (44) to (53); \draw[bend left] (43) to (53); \draw[bend left] (43) to (54);
		\end{tikzpicture}
		\caption{The graph $\mathrm{B}^2_5$}
	\end{figure}

	\item `Backbone' $(k+1)$-graph $\HJkl$ is a $(k+1)$-graph on the vertex set $\Wkl$ and the edge set given by the union of following edge-disjoint $(k+1)$-graphs:	
	\begin{itemize}
		\item the $(k, 2k + 1)$-tight-path $(\mathbf{w}_1^a, x, \mathbf{w}_1^b)$;
		\item the $(k, 2k)$-tight-path $(\mathbf{w}_i^a, \mathbf{w}_i^b)$ for every $2 \le i \le \ell$;		
		\item the $(k, 2k)$-tight-path $(\mathbf{w}_2^a, \overline{\mathbf{w}}_1^a)$;
		\item the $(k, 2k)$-tight-path $(\mathbf{w}_{i+2}^a, \mathbf{w}_{i}^b)$ for every $1 \le i \le \ell - 2$;
		\item the $(k, 2k)$-tight-path $(\overline{\mathbf{w}}_{\ell}^b, \mathbf{w}_{\ell - 1}^b)$.
	\end{itemize}
\end{itemize}


\begin{claim} \label{claim:absorber}
Let $A_x$ be a graph ($(k+1)$-graph) obtained from $\Jkl$ ($\HJkl$), for some $k \ge 2$ and $\ell \ge 4$, by adding an arbitrary $k$-connecting-path ($k$-tight-path) $U_i$ from $\mathbf{w}_i^b$ to $\mathbf{w}_{i+1}^a$ for every $1 \le i < \ell$, such that all these paths are pairwise vertex-disjoint and also disjoint from $\Jkl$ ($\HJkl$) (except for the $k$-tuples of vertices they connect). Then $A_x$ is an $(\mathbf{w}_1^a, \mathbf{w}_\ell^b, \{x\})$-absorber.
\end{claim}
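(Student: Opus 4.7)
The claim reduces to exhibiting, for each $X' \subseteq \{x\}$, an explicit spanning $k$-path (or $k$-tight-path, in the hypergraph case) from $\mathbf{w}_1^a$ to $\mathbf{w}_\ell^b$ on the vertex set $V(A_x) \setminus X'$. Since $|X| = 1$, only two cases arise, $X' = \emptyset$ and $X' = \{x\}$, so I only need to produce two explicit paths $P_0$ and $P_1$, each obtained by stringing together some of the backbone paths of $\Jkl$ (resp.\ $\HJkl$) with the given connectors $U_i$. My standing observation is the following gluing principle: a sequence of $k$-paths and $k$-connecting-paths in which every $k$-connecting-path is flanked on both sides by $k$-paths, and in which consecutive pieces meet in a matching endpoint $k$-tuple while otherwise being vertex-disjoint, concatenates to a genuine $k$-path. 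Indeed, the only edges a $k$-connecting-path is missing relative to a $k$-path are those inside its two endpoint $k$-tuples, and those are supplied by the flanking $k$-paths. In the hypergraph case the same principle holds for $k$-tight-paths, and is easier to check: every $(k{+}1)$-set of consecutive vertices in the concatenation lies entirely inside one of the two pieces meeting at that interface.

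For $P_0$ I traverse the direct skeleton of $\Jkl$: begin with the $(k,2k+1)$-path $(\mathbf{w}_1^a, x, \mathbf{w}_1^b)$ (which absorbs $x$), follow with $U_1$, then the $(k, 2k)$-path $(\mathbf{w}_2^a, \mathbf{w}_2^b)$, then $U_2$, and so on, finishing with the $(k,2k)$-path $(\mathbf{w}_\ell^a, \mathbf{w}_\ell^b)$ which ends at $\mathbf{w}_\ell^b$. Every joint is a matching tuple $\mathbf{w}_i^b$ or $\mathbf{w}_i^a$, consecutive pieces meet only in that tuple, and the union of their vertex sets is $V(A_x)$.

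For $P_1$ I zigzag through the ``alternative'' backbone paths (3), (4), (5) in the definition of $\Jkl$. I begin with the reverse of (3), a $k$-path from $\mathbf{w}_1^a$ to $\overline{\mathbf{w}}_2^a$ on $\mathbf{w}_1^a \cup \mathbf{w}_2^a$ that bypasses $x$, and continue with $\overline{U}_1$ from $\overline{\mathbf{w}}_2^a$ to $\overline{\mathbf{w}}_1^b$. Then, for $j = 1, 2, \ldots, \ell-2$, I alternate the reverse of (4) with index $i = j$, which is a $k$-path from $\overline{\mathbf{w}}_j^b$ to $\overline{\mathbf{w}}_{j+2}^a$ covering $\mathbf{w}_j^b \cup \mathbf{w}_{j+2}^a$, with the reversed connector $\overline{U}_{j+1}$ from $\overline{\mathbf{w}}_{j+2}^a$ to $\overline{\mathbf{w}}_{j+1}^b$. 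I close with the reverse of (5), a $k$-path from $\overline{\mathbf{w}}_{\ell-1}^b$ to $\mathbf{w}_\ell^b$ covering $\mathbf{w}_{\ell-1}^b \cup \mathbf{w}_\ell^b$. A direct count shows that together these pieces cover every $\mathbf{w}_i^a$ (each appears in either the reverse of (3) or in some reversed (4)), every $\mathbf{w}_i^b$ (for $i \leq \ell-2$ in some reversed (4), and for $i \in \{\ell-1,\ell\}$ in the reverse of (5)), and every internal vertex of every $U_i$; this is exactly $V(A_x) \setminus \{x\}$.

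The only remaining work is routine bookkeeping: checking that each reversed segment starts on the tuple supplied by its predecessor (which is where each $\overline{\cdot}$ is needed), that distinct segments share only their advertised interface tuples, and that the overall concatenation is indeed a $k$-path (resp.\ $k$-tight-path) rather than a looser object---all of which follow immediately from the gluing principle together with the explicit list of vertices each segment uses. The hypergraph version of the claim is obtained by the identical construction with ``$k$-path'' and ``$k$-connecting-path'' replaced by ``$k$-tight-path'' throughout; by the simpler hypergraph gluing principle above, no further argument is needed.
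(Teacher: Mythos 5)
Your proposal is correct and follows essentially the same route as the paper: the paper's proof likewise reduces to the two cases $X'=\emptyset$ and $X'=\{x\}$ and exhibits exactly the two traversals you describe (the direct skeleton through $x$, and the zigzag $\mathbf{w}_1^a, \overline{\mathbf{w}}_2^a, \overline{U}_1, \overline{\mathbf{w}}_1^b, \overline{\mathbf{w}}_3^a, \overline{U}_2, \ldots, \mathbf{w}_\ell^b$ avoiding $x$). Your explicit ``gluing principle'' for why a $k$-connecting-path flanked by $k$-paths concatenates to a genuine $k$-path is a welcome addition that the paper leaves implicit.
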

\begin{proof}
Note that there are only two cases we need to consider,  $X' = \emptyset$ and $X' = \{x\}$. We specify a desired path from $\mathbf{w}_1^a$ to $\mathbf{w}_\ell^b$ in each case by giving the  ordering in which we traverse the vertices of such a path (see Figure \ref{fig:absorb_example}):
\begin{multicols}{2}
	 $(i)$ $X' = \emptyset$
		\begin{align*}
			\mathbf{w}_1^a, x, &\mathbf{w}_1^b, U_1, \mathbf{w}_2^a,\\ 
			&\mathbf{w}_2^b, U_2, \mathbf{w}_3^a, \\
			&\qquad \vdots \\
			& \mathbf{w}_{\ell - 1}^b, U_{\ell-1}, \mathbf{w}_{\ell}^a, \mathbf{w}_{\ell}^b.
		\end{align*}

	\columnbreak
	$(ii)$ $X' = \{x\}$
	\begin{align*}
		\mathbf{w}_1^a, & \overline{\mathbf{w}}_2^a, \overline{U}_1, \overline{\mathbf{w}}_1^b, \\
		&\overline{\mathbf{w}}_3^a, \overline{U}_2, \overline{\mathbf{w}}_2^b, \\
		&\overline{\mathbf{w}}_4^a, \overline{U}_3, \overline{\mathbf{w}}_3^b, \\
		 &\qquad \vdots \\
		 &\overline{\mathbf{w}}_{\ell}^a, \overline{U}_{\ell - 1}, \overline{\mathbf{w}}_{\ell - 1}^b, \mathbf{w}_{\ell}^b.
	\end{align*}
\end{multicols}

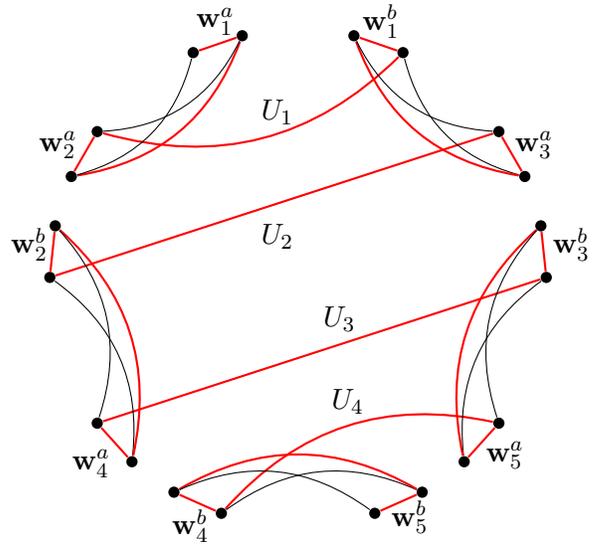
\begin{figure}[!ht] 
		\label{fig:absorb_example}		
		\centering
		\begin{tikzpicture}[scale = 0.55]
			\tikzstyle{blob} = [fill=black,circle,inner sep=1.5pt,minimum size=0.5pt]
			
			\foreach \a in {2,4}{
				\foreach \i in {1,...,4}{				
					\node[blob] (\a\i) at ({((\a * 3 + \i - 1)*360/30 - 288}:6) {};					
				}

				\draw[color=blue, thick] (\a1) to (\a2);
				\draw[bend right] (\a1) to (\a3);			
				\draw[color=blue, thick]  (\a2) to (\a3);
				\draw[bend right] (\a2) to (\a4);
				\draw[color=blue, thick]  (\a3) to (\a4);
			}

			\foreach \i in {1,...,4}
			{	
				\node[blob] (5\i) at ({(4 * 6  - \i - 2)*360/30 - 288}:6) {};							
			}
			\draw[color=blue, thick]  (51) to (52);
			\draw[bend left] (51) to (53);			
			\draw[color=blue, thick]  (52) to (53);
			\draw[bend left] (52) to (54);
			\draw[color=blue, thick]  (53) to (54);

			\foreach \i in {1,...,4}
			{	
				\node[blob] (3\i) at ({5 * 6  - \i - 2)*360/30 - 288}:6) {};							
			}		
			\draw[color=blue, thick]  (31) to (32);
			\draw[bend left] (31) to (33);			
			\draw[color=blue, thick]  (32) to (33);
			\draw[bend left] (32) to (34);
			\draw[color=blue, thick]  (33) to (34);

			\node[blob] (04) at ({0 - 295}:6) {}; 
			\node[blob] (03) at ({1 * 360/30 - 295}:6) {}; 
			\node[blob] (02) at ({2 * 360/30 - 281}:6) {}; 
			\node[blob] (01) at ({3 * 360/30 - 281}:6) {}; 

			\node[blob] (x) at ({360/20 - 288}:8) {}; \node at ({360/20 - 288}:8.5) {$x$};

			\draw[color=blue, thick]  (01) to (02); \draw[color=blue, thick]  (02) to (x); 
			\draw[color=blue, thick]  (x) to (03); 
			\draw[color=blue, thick]  (03) to (04); 
			\draw (02) to (03); \draw[bend left=50] (01) to (x); \draw[bend left=50] (x) to (04);

			\draw[color=blue, thick] (04) to (21); \node at (-0.5, 4) {$U_1$};
			\draw[color=blue, thick] (24) to (31); \node at (-0.5, 1) {$U_2$};
			\draw[color=blue, thick] (34) to (41); \node at (1, -1) {$U_3$};
			\draw[color=blue, thick] (44) to (51); \node at (1, -4) {$U_4$};

			\node at (-1.9, 6.2) {$\mathbf{w}_1^a$}; \node at (1.9, 6.2) {$\mathbf{w}_1^b$};
			\node at (-6, 3.2) {$\mathbf{w}_2^a$}; \node at (-6.7, 0.8) {$\mathbf{w}_2^b$};
			\node at (6, 3.2) {$\mathbf{w}_3^a$}; \node at (6.7, 0.8) {$\mathbf{w}_3^b$};
			\node at (-5, -4.5) {$\mathbf{w}_4^a$}; \node at (-2.3, -5) {$\mathbf{w}_4^b$};
			\node at (5, -4.3) {$\mathbf{w}_5^a$}; \node at (2.3, -4.9) {$\mathbf{w}_5^b$};

			\begin{scope}[shift={(16,0)}]
				\foreach \a in {2,4}{
					\foreach \i in {1,...,4}{				
						\node[blob] (\a\i) at ({((\a * 3 + \i - 1)*360/30 - 288}:6) {};					
					}

					\draw[color=red, thick] (\a1) to (\a2);
					\draw[color=red, thick] (\a3) to (\a4);
				}

				\foreach \i in {1,...,4}
				{	
					\node[blob] (5\i) at ({(4 * 6  - \i - 2)*360/30 - 288}:6) {};								
				}
				\draw[color=red, thick] (51) to (52);				
				\draw[color=red, thick] (53) to (54);

				\foreach \i in {1,...,4}
				{	
					\node[blob] (3\i) at ({5 * 6  - \i - 2)*360/30 - 288}:6) {};								
				}		
				\draw[color=red, thick] (31) to (32);				
				\draw[color=red, thick] (33) to (34);

				\node[blob] (04) at ({0 - 295}:6) {}; 
				\node[blob] (03) at ({1 * 360/30 - 295}:6) {}; 
				\node[blob] (02) at ({2 * 360/30 - 281}:6) {}; 
				\node[blob] (01) at ({3 * 360/30 - 281}:6) {}; 

				\draw[color=red, thick] (01) to (02); \draw[color=red, thick] (03) to (04); 

				\draw[color=red, thick, bend left] (04) to (21); \node at (-0.5, 4) {$U_1$};
				\draw[color=red, thick] (24) to (31); \node at (-0.5, 1) {$U_2$};
				\draw[color=red, thick] (34) to (41); \node at (1, -1) {$U_3$};
				\draw[color=red, thick, bend left] (44) to (51); \node at (1.2, -3) {$U_4$};

				\draw[bend left, color=red, thick] (02) to (22); \draw[bend left] (01) to (22); \draw[bend left] (02) to (21);
				\draw[bend right] (04) to (32); \draw[bend right, color=red, thick] (03) to (32); \draw[bend right] (03) to (31);
				\draw[bend left] (24) to (42); \draw[bend left, color=red, thick] (23) to (42); \draw[bend left] (23) to (41);
				\draw[bend right] (34) to (52); \draw[bend right, color=red, thick] (33) to (52); \draw[bend right] (33) to (51);

				\draw[bend left] (44) to (53); \draw[bend left, color=red, thick] (43) to (53); \draw[bend left] (43) to (54);

				\node at (-2, 6.2) {$\mathbf{w}_1^a$}; \node at (2, 6.2) {$\mathbf{w}_1^b$};
				\node at (-5.8, 3.2) {$\mathbf{w}_2^a$}; \node at (-6.5, 0.8) {$\mathbf{w}_2^b$};
				\node at (5.7, 3.2) {$\mathbf{w}_3^a$}; \node at (6.6, 0.8) {$\mathbf{w}_3^b$};
				\node at (-5, -4.5) {$\mathbf{w}_4^a$}; \node at (-2.6, -6) {$\mathbf{w}_4^b$};
				\node at (5, -4.3) {$\mathbf{w}_5^a$}; \node at (2.7, -5.8) {$\mathbf{w}_5^b$};
			\end{scope}
		\end{tikzpicture}
		\caption{Left: the $k$-path from $\mathbf{w}_1^a$ to $\mathbf{w}_{5}^b$ which includes $x$. Right: the $k$-path from $\mathbf{w}_1^a$ to $\mathbf{w}_{5}^b$ without $x$. Note that both $k$-paths use all other vertices of $A_x$.}
	\end{figure}

\end{proof}

With Claim \ref{claim:absorber} in mind, the first step in constructing many single-vertex absorbers is finding many disjoint copies of the `backbone' (hyper)graph. In order to apply Corollary \ref{cor:factor} to deduce the existence of such copies, we estimate the $m_1$-density of these graphs (see Section \ref{sec:corollaries}.

\begin{claim}
\label{claim:abs_dens}
Let $\ell \in \mathbb{N}$ be an odd integer such that $\ell \ge 3$. Then $m_1(\Jkl) \le k$ and $m_1(\HJkl) \le 1$.
\end{claim}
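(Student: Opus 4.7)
The plan is to decompose each backbone as a union of $2\ell$ overlapping constituent (tight-)paths and bound edge/vertex counts on arbitrary subgraphs. Concretely, write $\Jkl$ (and analogously $\HJkl$) as $P_0 \cup \bigcup_{i=2}^\ell P_i^{(1)} \cup \bigcup_{j=1}^\ell Q_j$, where $P_0=(\mathbf{w}_1^a,x,\mathbf{w}_1^b)$ is the ``central'' path, each ``rung'' $P_i^{(1)}=(\mathbf{w}_i^a,\mathbf{w}_i^b)$ is the short end-to-end path, and the $Q_j$'s are the ``connectors'' $(\mathbf{w}_2^a,\overline{\mathbf{w}}_1^a)$, $(\mathbf{w}_{i+2}^a,\mathbf{w}_i^b)$ for $1\le i\le \ell-2$, and $(\overline{\mathbf{w}}_\ell^b,\mathbf{w}_{\ell-1}^b)$. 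The key structural fact is that any two constituent paths share at most a single tuple $\mathbf{w}_i^{a/b}$, i.e.\ at most $k$ vertices.

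For $\HJkl$ this immediately pays off: since every hyperedge spans $k+1$ vertices while shared vertex sets have only $k$, the constituent tight-paths are pairwise edge-disjoint. Direct summation then gives $e(\HJkl)=(k+1)+(2\ell-1)k$ and $v(\HJkl)=2k\ell+1$. For an arbitrary non-empty subhypergraph $F'\subseteq\HJkl$ I would decompose $F'=\bigcup_j(F'\cap P_j)$, use the tight-path identity $e=v-k$ to bound $e(F'\cap P_j)\le v(F'\cap P_j)-k$, and then perform an inclusion-exclusion on the shared (hyperedge-free) $k$-tuples to obtain $e(F')\le v(F')-1$. This gives $m_1(\HJkl)\le 1$.

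For $\Jkl$ the constituent $k$-paths now share $K_k$-cliques of $\binom{k}{2}$ edges on each common tuple; summing path edges and subtracting these shared cliques yields $e(\Jkl)=2\ell k^{2}+k$ and $v(\Jkl)=2k\ell+1$, so $e(\Jkl)=k\cdot v(\Jkl)$. To transfer this to all subgraphs, I would exhibit an orientation of $E(\Jkl)$ in which every vertex has in-degree at most $k$; by Hakimi's theorem such an orientation exists iff $e(G')\le k\,v(G')$ for every $G'\subseteq\Jkl$. I would verify this density bound by inducting on constituent paths in the decomposition, using that each added $(k,2k)$-path contributes exactly $k$ new vertices and $k(3k-1)/2-\binom{k}{2}=k^{2}$ new edges, so the ratio $e/v\le k$ is preserved. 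Restricting the orientation to any $F'\subseteq\Jkl$ then gives $e(F')\le k\,v(F')$, and hence $m_1(\Jkl)\le k$.

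The main obstacle is the inductive step for the ``cycle-closing'' path $Q_\ell=(\overline{\mathbf{w}}_\ell^b,\mathbf{w}_{\ell-1}^b)$ (and its hypergraph analogue), which is attached to \emph{two} pre-existing $k$-tuples simultaneously and therefore adds edges without adding new vertices. Showing that the in-degree orientation (respectively the inclusion-exclusion) still goes through requires using the slack accumulated from earlier attachments and the cyclic structure of $\mathbf{w}_1,\ldots,\mathbf{w}_\ell$; this is where the parity condition that $\ell$ is odd comes in, via the alternating way in which rungs and connectors share their endpoint tuples around the cycle.
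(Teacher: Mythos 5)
Your edge counts are correct, and they already refute the bound you set out to prove: $e(\Jkl)=2\ell k^{2}+k=k\cdot v(\Jkl)$ and $e(\HJkl)=(k+1)+(2\ell-1)k=v(\HJkl)$, so taking $F'=F$ in the definition of $m_1$ gives $m_1(\Jkl)\ge k\,v/(v-1)>k$ and $m_1(\HJkl)\ge v/(v-1)>1$. Consequently neither the inclusion--exclusion for $\HJkl$ nor the orientation argument for $\Jkl$ can end with $e(F')\le v(F')-1$, respectively $e(F')\le k(v(F')-1)$, for \emph{all} subgraphs. The concrete invalid step is your last inference for $\Jkl$: an orientation with all in-degrees at most $k$ certifies only $e(F')\le k\,v(F')$, which is weaker by an additive $k$ than what $m_1(\Jkl)\le k$ requires. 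Your worry about the cycle-closing path is exactly where this additive $k$ is lost, and no parity argument recovers it: in your own bookkeeping the central path starts with slack $\binom{k}{2}$ below $k(v-1)$, every path that introduces a new tuple preserves that slack ($k$ new vertices, $k^{2}$ new edges), and the single path that closes the cycle on the $2\ell$ tuples adds $\binom{k+1}{2}=\binom{k}{2}+k$ edges and no new vertices. Oddness of $\ell$ only ensures that the reduced multigraph on the tuples is one cycle rather than two; it creates no extra slack.

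For what it is worth, the paper's own proof hits the same wall: it asserts that $\Jkl$ is $k$-degenerate, which would force $e\le k(v-1)$ and contradicts $e=kv$; concretely, in the proposed ordering the last vertex of $\mathbf{w}_1^b$ has $k+1$ earlier neighbours, because $\overline{\mathbf{w}}_3^a$ in fact precedes $\mathbf{w}_1^b$, contrary to what the case ``$v\in\mathbf{w}_1^b$'' claims. The statement that is both true and sufficient for the application is $m_1(\Jkl)\le k+O(1/\ell)$ and $m_1(\HJkl)\le 1+O(1/(k\ell))$: with $\ell=\log n$ the resulting $O(1/\log n)$ loss in the exponent of $p$ is absorbed into the constant exactly as in the derivation of \eqref{eq:plug_m} in the proof of Corollary~\ref{cor:connecting}. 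Proving that corrected bound along your lines still requires the part you only sketch: show $e(F')\le k(v(F')-1)$ for every subgraph $F'$ that does not complete the cycle on the tuples (here a degeneracy ordering, or your inductive attachment argument without the closing path, does work), and handle the remaining subgraphs by the same kind of case analysis used for $m(\Pkl,\mathbf{u})$ in Corollary~\ref{cor:connecting}, using that they necessarily contain $\Omega(k\ell)$ vertices.
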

\begin{proof}
We only prove the part of the claim concerning graph $\Jkl$ as the other part follows an analogous argument. Given a (hyper)graph $F$ and an integer $k \in \mathbb{N}$, we say that $F$ is $k$-degenerate if there exists an ordering $V(H) = \{v_1, \ldots, v_h\}$ of its vertices such that for each $i \in [h]$ there are at most $k$ edges $e \in F$ such that $v_i \in e$ and $e \subseteq \{v_1, \ldots, v_i\}$. Moreover, any ordering which witnesses that $H$ is $k$-degenerate is called \emph{$k$-degenerate ordering}. It is a standard exercise to show that (i) a $k$-degenerate (hyper)graph $F$ has at most $e(F) \le k (v(F) - 1)$ edges and (ii) every subgraph $F' \subseteq F$ is also $k$-degenerate. Having these two facts in mind, in order to show $m_1(\Jkl) \le k$ it suffices to show that $\Jkl$ is $k$-degenerate: for every subgraph $F' \subseteq \Jkl$ we have $v(F') \le k (v(F') - 1)$ as $F'$ is then also $k$-degenerate, thus
$$
	\frac{e(F')}{v(F') - 1} \le k.
$$

We verify that the following ordering of the vertices of $\Jkl$ witnesses its $k$-degeneracy (see Figure \ref{fig:jumping_jack_ordering}),
\begin{align*}
V(\Jkl) = (	x, &\overline{\mathbf{w}}_1^a, \\
			&\overline{\mathbf{w}}_2^a, \mathbf{w}_2^b, \overline{\mathbf{w}}_4^a, \mathbf{w}_4^b, \ldots, \overline{\mathbf{w}}_{\ell-3}^a, \mathbf{w}_{\ell-3}^b, \overline{\mathbf{w}}_{\ell-1}^a, \mathbf{w}_{\ell-1}^b, \\
 			&\mathbf{w}_\ell^b, \overline{\mathbf{w}}_{\ell}^a, \mathbf{w}_{\ell-2}^b, \overline{\mathbf{w}}_{\ell-2}^a, \mathbf{w}_{\ell-4}^b, \overline{\mathbf{w}}_{\ell-4}^a, \ldots, \mathbf{w}_{3}^b, \overline{\mathbf{w}}_{3}^a, \\
 			&\mathbf{w}_1^b).
\end{align*}

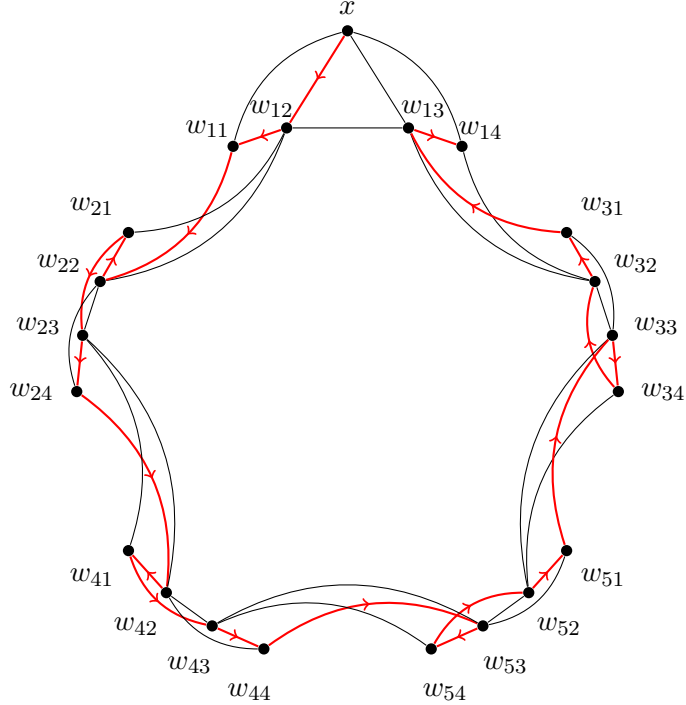
\begin{figure}[!ht] 
		\label{fig:jumping_jack_ordering}		
		\centering
		\begin{tikzpicture}[scale = 0.6, decoration={markings, mark=at position 0.5 with {\arrow{>}}}]

			\tikzstyle{blob} = [fill=black,circle,inner sep=1.5pt,minimum size=0.5pt]
			
			\foreach \a in {2,4}{
				\foreach \i in {1,...,4}{				
					\node[blob] (\a\i) at ({((\a * 3 + \i - 1)*360/30 - 288}:6) {};
					\node at ({(\a * 3 + \i - 1)*360/30 - 288}:7) {$w_{\a\i}$};				
				}

				\draw[red, thick, postaction={decorate}] (\a2) to (\a1);
				\draw[bend right, red, thick, postaction={decorate}] (\a1) to (\a3);			
				\draw (\a2) to (\a3);
				\draw[bend right] (\a2) to (\a4);
				\draw[red, thick, postaction={decorate}] (\a3) to (\a4);
			}

			\foreach \i in {1,...,4}
			{	
				\node[blob] (5\i) at ({(4 * 6  - \i - 2)*360/30 - 288}:6) {};			
				\node at ({(4 * 6  - \i - 2)*360/30 - 288}:7) {$w_{5\i}$};	
			}
			\draw[red, thick, postaction={decorate}] (52) to (51);
			\draw[bend left] (51) to (53);			
			\draw (52) to (53);
			\draw[bend left, red, thick, postaction={decorate}] (54) to (52);
			\draw[red, thick, postaction={decorate}] (53) to (54);

			\foreach \i in {1,...,4}
			{	
				\node[blob] (3\i) at ({5 * 6  - \i - 2)*360/30 - 288}:6) {};			
				\node at  ({(5 * 6  - \i - 2)*360/30 - 288}:7) {$w_{3\i}$};			
			}		
			\draw[red, thick, postaction={decorate}] (32) to (31);
			\draw[bend left] (31) to (33);			
			\draw (32) to (33);
			\draw[bend left, red, thick, postaction={decorate}] (34) to (32);
			\draw[red, thick, postaction={decorate}] (33) to (34);

			\node[blob] (04) at ({0 - 295}:6) {}; \node at ({0 - 297}:6.5) {$w_{14}$};
			\node[blob] (03) at ({1 * 360/30 - 295}:6) {}; \node at ({1 * 360/30 - 297}:6.5) {$w_{13}$};
			\node[blob] (02) at ({2 * 360/30 - 281}:6) {}; \node at ({2 * 360/30 - 279}:6.5) {$w_{12}$};
			\node[blob] (01) at ({3 * 360/30 - 281}:6) {}; \node at ({3 * 360/30 - 278}:6.6) {$w_{11}$};

			\node[blob] (x) at ({360/20 - 288}:8) {}; \node at ({360/20 - 288}:8.5) {$x$};

			\draw[red, thick, postaction={decorate}] (02) to (01); \draw[red, thick, postaction={decorate}] (x) to (02); \draw (x) to (03); \draw[red, thick, postaction={decorate}] (03) to (04); \draw (02) to (03);
			\draw[bend left] (01) to (x); \draw[bend left] (x) to (04);

			\draw[bend left] (02) to (22); \draw[bend left, red, thick, postaction={decorate}] (01) to (22); \draw[bend left] (02) to (21);
			\draw[bend right] (04) to (32); \draw[bend right] (03) to (32); \draw[bend left, red, thick, postaction={decorate}] (31) to (03);
			\draw[bend left, red, thick, postaction={decorate}] (24) to (42); \draw[bend left] (23) to (42); \draw[bend left] (23) to (41);
			\draw[bend right] (34) to (52); \draw[bend right] (33) to (52); \draw[bend left, red, thick, postaction={decorate}] (51) to (33);

			\draw[bend left, red, thick, postaction={decorate}] (44) to (53); \draw[bend left] (43) to (53); \draw[bend left] (43) to (54);
		\end{tikzpicture}
		\caption{The ordering of $V(\mathrm{B}_\ell^k)$. An arrow pointing from $v$ to $w$ means that $v$ comes before $w$.}
	\end{figure}

For each vertex $v \in V(H)$ let $e(v)$ denote the number of edges which contain $v$ and do not contain any vertices succeeding $v$ in the described ordering. We need to check $e(v) \le k$ for every $v \in V(H)$. We distinguish different cases depending on the position of $v$:
\begin{itemize}
\item $v \in \{x\} \cup \overline{\mathbf{w}}_1^a$: there are at most $k$ vertices which precede such $v$, thus $e(v) \le k$.

\item $v \in \overline{\mathbf{w}}_{i}^a$ for even $i \in \{2, 4, \ldots, \ell - 1\}$:
From the definition of $\Jkl$ one can see that the only edges incident to $v$ belong to $k$-paths $\mathbf{w}_{i}^a \mathbf{w}_{i}^b$ and $\mathbf{w}_{i}^a \mathbf{w}_{i -2}^b$ (if $i \ge 4$), that is, $\mathbf{w}_{i}^a \mathbf{w}_{i}^b$ and $\mathbf{w}_{i}^a \overline{\mathbf{w}}_{1}^a$ (if $i = 2$). Since the vertices in $\mathbf{w}_{i}^b$ appear after $v$, the edges from the first $k$-path do not contribute to $e_{v}$. Moreover, the vertices of the second set of $k$-paths are ordered as $(\mathbf{w}_{i-2}^b, \overline{\mathbf{w}}_{i}^a)$ and $(\overline{\mathbf{w}}_{1}^a, \overline{\mathbf{w}}_{2}^a)$, respectively, which is easily seen to be $k$-degenerate. 
Therefore, we conclude $e(v) \le k$.

 \item $v \in \mathbf{w}_{i}^b$ for even $i \in \{2, 4, \ldots, \ell - 1\}$:  
 The edges incident to $v$ are obtained from the union of $k$-paths $\mathbf{w}_{i}^a \mathbf{w}_{i}^b$ and 
 $\mathbf{w}_{i+2}^a \mathbf{w}_{i}^b$ (if $i < \ell - 1$), that is, $\mathbf{w}_{i}^a \mathbf{w}_{i}^b$ and 
 $\overline{\mathbf{w}}_{\ell}^b \mathbf{w}_{i}^b$ (if $i = \ell - 1$). As the vertices of $\mathbf{w}_{i+2}^a$ and $\overline{\mathbf{w}}_{\ell}^b$ appear after $v$, respectively, they do not contribute to $e(v)$. The first set of $k$-paths are ordered as $(\overline{\mathbf{w}}_{i}^a \mathbf{w}_{i}^b)$, thus we conclude $e(v) \le k$.

 \item $v \in \mathbf{w}_{i}^b$ for odd $i \in \{3, 5, \ldots, \ell\}$: 
 The edges incident to $v$ belong to the union of $k$-paths $\mathbf{w}_{i}^a\mathbf{w}_{i}^b$ and $\mathbf{w}_{i+2}^a \mathbf{w}_{i}^b$ (if $i < \ell$), that is, $\mathbf{w}_{i}^a\mathbf{w}_{i}^b$ and $\overline{\mathbf{w}}_{i}^b \mathbf{w}_{\ell - 1}^b$ (if $i = \ell$). 
 Since the vertices from $\mathbf{w}_{i}^a$ appear after $v$, edges incident to them do not contribute to $e(v)$. The vertices of the second set of $k$-paths 
 appear in the order $(\overline{\mathbf{w}}_{i+2}^a,\mathbf{w}_{i}^b)$ (if $i < \ell$) and $(\mathbf{w}_{\ell-1}^b, \mathbf{w}_i^b)$ (if $i = \ell$), from which we conclude $e(v) \le k$.
 
 \item $v \in \overline{\mathbf{w}}_{i}^a$ for odd $i \in \{3, 5, \ldots, \ell\}$: The edges incident to $v$ are obtained from the union of $k$-paths $\mathbf{w}_{i}^a\mathbf{w}_{i}^b$ and $\mathbf{w}_{i}^a \mathbf{w}_{i-2}^b$. Since the vertices of $\mathbf{w}_{i-2}^b$ appear after $v$, edges incident to them do not contribute to $e(v)$. The vertices of the first set if $k$-paths appear in the order $(\mathbf{w}_{i}^b, \overline{\mathbf{w}}_{i}^a)$ which implies $e(v) \le k$.


 \item Let $v \in \mathbf{w}_1^b$: The edges incident to $v$ are obtained from the union of $2k$-paths 
 $\mathbf{w}_1^a x \mathbf{w}_1^b$ and 
 $\mathbf{w}_3^a \mathbf{w}_1^b$. The vertices of $\mathbf{w}_3^a$ appear after $v$ thus the edges from the second $k$-path do not contribute to $e(v)$.  
 The vertices of the first $k$-path appear in the order 
 $(x,\overline{\mathbf{w}}_1^a,\mathbf{w}_1^b)$ which is a $k$-degenerate ordering and thus $e(v) \le k$.

\end{itemize}
\end{proof}



Finally, we are ready to prove the main lemma of this section.

\begin{proof}[Proof of Lemma \ref{lemma:absorbing_path}]
Consider some $k \ge 2$. Let $[n] = W_1 \cup W_2 \cup W_3$ be an equipartition of $[n]$ and set $\ell = \log n$. From estimates on $m_1(\cdot)$ given by Claim \ref{claim:abs_dens}, we conclude that $G \sim \Gnp$ ($G \sim \Gknpp$) a.a.s satisfies the property of Corollary \ref{cor:factor} for $W = W_1$ and $F = \Jkl$ ($F = \HJkl$) for $p$ as stated. Let us denote this property by (PF). Furthermore, for $p = \Omega(\log^6 n / n)^{1/k}$ we have that $G \sim \Gnp$ a.a.s satisfies the property of Corollary \ref{cor:connecting} (Graph Connecting Lemma) for $W = W_2$ and $W = W_3$. Similarly, for $p = \Omega(\log^6 n / n)$ the random $(k+1)$-graph $G \sim \Gknpp$ a.a.s satisfies the property of Corollary \ref{cor:connecting_hyper} (Hypergraph Connecting Lemma) for $W = W_2$ and $W = W_3$. We denote this property by (PCONN). In the rest of the proof we show that these two properties suffice for the existence of a desired absorber.


By the property (PF), $G$ contains a family $\{F_i\}_{i \in [t]}$ of $t = n / 16 \log^2 n$ vertex-disjoint copies of $F = \Jkl$ ($F = \HJkl$) with all vertices being in $W_1$. Let $g_i \colon F \rightarrow F_i$ denote an isomorphism of $F$ into $F_i$, for each $i \in [t]$. Next, consider the family of $2k$-tuples $\{ (g_i(\mathbf{w}_j^b), g_i(\mathbf{w}_{j+1}^a) \}_{i \in [t], j \in [\ell - 1]}$. By the property (PCONN) there exists a family $\{U^i_{j} \subseteq G\}_{i \in [t], j \in [\ell - 1]}$ of vertex-disjoint $k$-connecting-paths ($k$-tight-paths), where $U_j^i$ is a $k$-connecting-path ($k$-tight-path) from $g_i(\mathbf{w}_j^b)$ to $g_i(\mathbf{w}^a_{j+1})$, with all internal vertices being in $W_2$. Claim \ref{claim:absorber} implies that the (hyper)graph $A_i$ given by $g_i(F) \cup \bigcup_{j \in [\ell - 1]} U_j^i$ (see Figure \ref{fig:step1}) is an $(\mathbf{a}_i, \mathbf{b}_i, x_i)$-absorber, where 
$$
	\mathbf{a}_i = g_i(\mathbf{w}_1^a), \quad \mathbf{b}_i = g_i(\mathbf{w}_{\ell}^b) \quad \text{ and } \quad x_i = g_i(x).
$$

\begin{figure}[!ht]
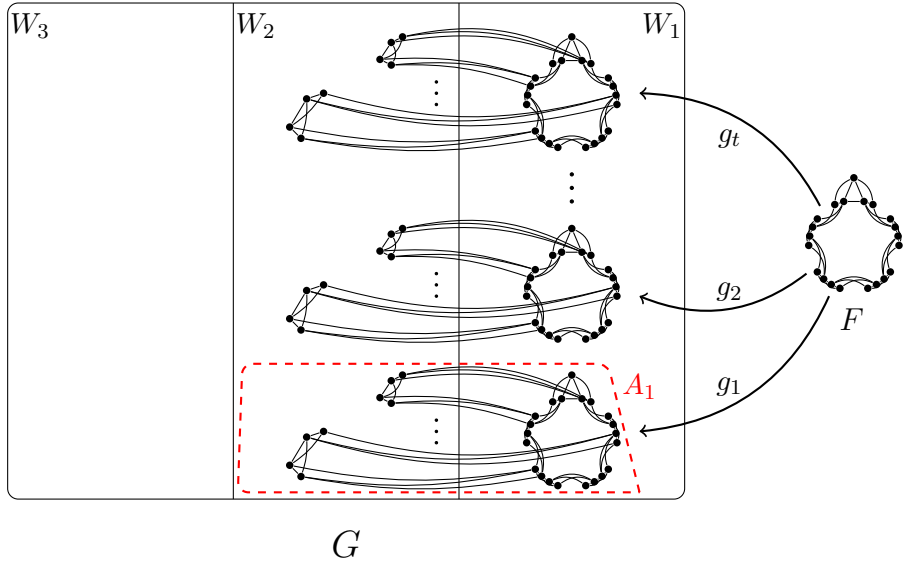
 
	\label{fig:step1}	

	\centering
	\begin{tikzpicture}[scale = 0.6]
		\tikzstyle{blob} = [fill=black,circle,inner sep=1pt,minimum size=0.1pt]		
		
		\draw[rounded corners] (0, 0) rectangle (15, 11); \node at (7.5, -1) {\Large $G$};
		\draw (10, 0) -- (10, 11);
		\draw (5, 0) -- (5, 11);

		\node at (14.5, 10.5) {$W_1$};
		\node at (5.5, 10.5) {$W_2$};
		\node at (0.5, 10.5) {$W_3$};

		\begin{scope}[scale = 0.25, shift = {(75, 22.5)}]
			\foreach \a in {2,4}{
				\foreach \i in {1,...,4}{				
					\node[blob] (\a\i) at ({((\a * 3 + \i - 1)*360/30 - 288}:4) {};		
				}

				\draw (\a1) to (\a2);
				\draw[bend right] (\a1) to (\a3);			
				\draw (\a2) to (\a3);
				\draw[bend right] (\a2) to (\a4);
				\draw (\a3) to (\a4);
			}

			\foreach \i in {1,...,4}
			{	
				\node[blob] (5\i) at ({(4 * 6  - \i - 2)*360/30 - 288}:4) {};				
			}
			\draw (51) to (52);
			\draw[bend left] (51) to (53);			
			\draw (52) to (53);
			\draw[bend left] (52) to (54);
			\draw (53) to (54);

			\foreach \i in {1,...,4}
			{	
				\node[blob] (3\i) at ({5 * 6  - \i - 2)*360/30 - 288}:4) {};				
			}		
			\draw (31) to (32);
			\draw[bend left] (31) to (33);			
			\draw (32) to (33);
			\draw[bend left] (32) to (34);
			\draw (33) to (34);

			\node[blob] (04) at ({0 - 295}:4) {}; 
			\node[blob] (03) at ({1 * 360/30 - 295}:4) {}; 
			\node[blob] (02) at ({2 * 360/30 - 281}:4) {}; 
			\node[blob] (01) at ({3 * 360/30 - 281}:4) {}; 

			\node[blob] (x) at ({360/20 - 288}:6) {}; 

			\draw (01) to (02); \draw (02) to (x); \draw (x) to (03); \draw (03) to (04); \draw (02) to (03);
			\draw[bend left] (01) to (x); \draw[bend left] (x) to (04);

			\draw[bend left] (02) to (22); \draw[bend left] (01) to (22); \draw[bend left] (02) to (21);
			\draw[bend right] (04) to (32); \draw[bend right] (03) to (32); \draw[bend right] (03) to (31);
			\draw[bend left] (24) to (42); \draw[bend left] (23) to (42); \draw[bend left] (23) to (41);
			\draw[bend right] (34) to (52); \draw[bend right] (33) to (52); \draw[bend right] (33) to (51);

			\draw[bend left] (44) to (53); \draw[bend left] (43) to (53); \draw[bend left] (43) to (54);
		\end{scope}	

		\node at (18.7, 4) {\large $F$};	

		\begin{scope}[scale = 0.25, shift = {(50, 5)}] \input{absorbah.pic} \end{scope}
		\begin{scope}[scale = 0.25, shift = {(50, 18)}] \input{absorbah.pic} \end{scope}
		\begin{scope}[scale = 0.25, shift = {(50, 35)}] \input{absorbah.pic} \end{scope}

		\draw[fill, black] (12.5, 7.2) circle [radius=1pt];
		\draw[fill, black] (12.5, 6.9) circle [radius=1pt];
		\draw[fill, black] (12.5, 6.6) circle [radius=1pt];

		\draw[bend right, ->, thick] (18, 6.5) to (14, 9); \node at (16, 8) {$g_t$};
		\draw[bend left, ->, thick] (17.7, 5) to (14, 4.5); \node at (16, 4.6) {$g_2$};
		\draw[bend left, ->, thick] (18.2, 4.5) to (14, 1.5); \node at (16, 2.5) {$g_1$};

		\draw[thick, rounded corners, dashed, red] (14, 0.15) -- (13.3, 3) -- (5.2, 3) -- (5.1, 0.15) -- (14, 0.15);
		\node[color=red] at (14, 2.5) {$A_1$};
	\end{tikzpicture}	

	\caption{Construction of disjoint single-vertex absorbers.}	
\end{figure}

Next, using vertices in $W_3$ we connect all $A_i$'s into a single absorber for the set $X = \{x_1, \ldots, x_t\}$. Consider the family of $2k$-tuples $\{(\mathbf{b}_i, \mathbf{a}_{i+1})\}_{i \in [t - 1]}$, and let $\{Q_i\}_{i \in [t - 1]}$ denote the family of $k$-connecting-paths ($k$-tight-paths) given by the property (PCONN), with all internal vertices being in $W_3$.  We claim that 
$$
	A = \bigcup_{i \in [t]} A_i \cup \bigcup_{i \in [t - 1]} Q_i
$$
is an $(\mathbf{a}_1, \mathbf{b}_t, X)$-absorber: Consider some subset $X' \subseteq X$ and for each $i \in [t]$ let $P_i \subseteq A_i$ be a $k$-path ($k$-tight-path) from $\mathbf{a}_i$ to $\mathbf{b}_i$ which contains $x_i$ iff $x_i \notin X'$ and, moreover, contains all other vertices in $A_i$. Such a path exists as $A_i$ is an $(\mathbf{a}_i, \mathbf{b}_i, x_i)$-absorber. Then
$$
	\mathbf{a}_1 \xrsquigarrow{$P_1$} \mathbf{b}_1 \xrsquigarrow{$Q_1$} \mathbf{a}_2 \xrsquigarrow{$P_2$} \mathbf{b}_2 \xrsquigarrow{$Q_2$} \; \ldots \; \xrsquigarrow{$Q_{t - 1}$} \mathbf{a}_t \xrsquigarrow{$P_t$} \mathbf{b}_t
$$
gives a $k$-path ($k$-tight-path) from $\mathbf{a}_1$ to $\mathbf{b}_t$ which contains all vertices in $A$ except those in $X'$. This concludes the proof of the lemma.
\end{proof}


\section{Proof of the main result}
\label{sec:main}

The last ingredient in the proof of our main results is the following classical result of Erd\H{o}s and R\'enyi \cite{erdos1964random}. Given $p = p(n) \in [0, 1]$, let $\mathcal{G}(n, n, p)$ denote a graph with the vertex set $A \cup B$, where $|A| = |B| = n$, formed by adding each possible edge between $A$ and $B$ with probability $p$, independently of all other edges.

\begin{theorem} \label{thm:matching}
    Let $p = p(n) \in [0, 1]$ be such that $np - \log n \rightarrow \infty$. Then $\mathcal{G}(n,n,p)$ contains a perfect matching with probability at least $1 - O(ne^{-np})$.
\end{theorem}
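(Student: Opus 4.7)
The plan is to prove Theorem \ref{thm:matching} via Hall's theorem, isolating the dominant failure mode, which is the presence of an isolated vertex. I would begin by noting that $\mathcal{G}(n,n,p)$ has no perfect matching if and only if Hall's condition fails on one of the two sides, i.e., there exist $S \subseteq A$ and $T \subsetneq B$ with $|S| = |T| + 1$ and $N(S) \subseteq T$ (or the symmetric event with $A$ and $B$ swapped). Since the two sides play symmetric roles, a factor of $2$ at the end lets me restrict to the case $S \subseteq A$, and by replacing $(S, T)$ by a minimal violator I may further assume $|S| \le \lceil (n+1)/2 \rceil$.

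The case $|S| = 1$ corresponds exactly to an isolated vertex in $A$: its probability is at most $2n(1-p)^n \le 2ne^{-np}$, which already matches the claimed order $O(ne^{-np})$. The remaining task is to show that, summed over $2 \le s \le \lceil (n+1)/2 \rceil$, the probability of a Hall violation at $|S|=s$ is of strictly lower order. Here the key trick is to refine the event: if $|S| \ge 2$ and no vertex of $A \cup B$ is isolated, then every vertex of $S$ must have at least one edge into $T$. Conditioning on this, the per-vertex probability becomes $(1-p)^{n-s+1}\bigl(1 - (1-p)^{s-1}\bigr) \le (1-p)^{n-s+1}(s-1)p$, and a union bound over choices of $S$ and $T$ yields
\[
\binom{n}{s}\binom{n}{s-1}\bigl((s-1)p\bigr)^s (1-p)^{s(n-s+1)}.
\]

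What remains is to show that the sum of this expression over $s \ge 2$ is $o(ne^{-np})$. I would split the range into $2 \le s \le \log n$ and $\log n < s \le n/2$. In the first regime, the extra factor $(sp)^s$ (with $p = O(\log n / n)$ in the relevant range) beats the polynomial factor $n^{2s-1}$ coming from the binomials, producing terms that are exponentially smaller than $ne^{-np}$. In the second regime, $(1-p)^{s(n-s+1)} \le e^{-psn/2}$ already gives super-polynomial decay since $psn \ge s \log n$ and $np - \log n \to \infty$, so the $n^{2s}$ factor is easily dominated. The main technical obstacle is the small-$s$ range, where one must carefully exploit the non-isolation condition to avoid losing the correct order of magnitude; once the $((s-1)p)^s$ factor is in hand, the calculation is routine. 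Adding the contributions from both cases and the isolated-vertex term gives the bound $O(ne^{-np})$ claimed in the theorem.
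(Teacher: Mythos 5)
The paper does not prove Theorem~\ref{thm:matching}: it is quoted as a classical result of Erd\H{o}s and R\'enyi, so there is no internal proof to compare against. Your strategy --- Hall's theorem, reduction to violators of size at most $\lceil (n+1)/2\rceil$, isolating the $|S|=1$ term as the dominant contribution, and exploiting non-isolation of the vertices of $S$ to gain the factor $((s-1)p)^s$ --- is exactly the classical argument, and your displayed per-$s$ union bound is correct. (Two small points of hygiene: the size restriction on $S$ does not follow from minimality alone but from the complementation step, i.e.\ if $S\subseteq A$ violates Hall with $|S|>(n+1)/2$ then $B\setminus N(S)$ violates Hall on the other side with the complementary size; and you should intersect with the event ``no isolated vertex'' rather than condition on it, since conditioning destroys edge independence --- the intended argument is clear, though.)

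The genuine gap is in how you sum the terms. In the regime $s>\log n$ you bound $(1-p)^{s(n-s+1)}\le e^{-psn/2}\le n^{-s/2}$ and claim this dominates the factor $n^{2s-1}$ from the binomial coefficients; it does not, since $n^{2s}\cdot n^{-s/2}=n^{3s/2}$, and the remaining factor $((s-1)p)^s\le (np)^s$ is only $(O(\log n))^s$ in the sparse regime. In fact the crude bound $\binom{n}{s}\binom{n}{s-1}\le n^{2s-1}$ is too lossy already for $s$ of order $\log n$ even if you keep the full exponent: writing $np=\ln n+c_n$ with $c_n\to\infty$, the term at $s=\log n$ becomes roughly $n^{-1}\bigl(\ln^2 n\,e^{-c_n}\bigr)^{s}$, which blows up whenever $c_n<2\ln\ln n$ --- and the theorem must hold for such $c_n$, where the target $ne^{-np}=e^{-c_n}$ is itself barely $o(1)$. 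The standard fix is to use $\binom{n}{s}\binom{n}{s-1}\le\bigl(e^2n^2/(s(s-1))\bigr)^s$, so that the generic term is $\bigl[\tfrac{e^2n^2p}{s}(1-p)^{n-s+1}\bigr]^s$; keeping $(1-p)^{n-s+1}\le e^{-pn}e^{ps}$ for $s=o(n)$ the bracket is at most $e^2(np)e^{-c_n}e^{ps}/s=o(\ln n)/s$, which is $o(1)$ for $s\ge\log n$, while for $s=\Theta(n)$ the factor $(en/s)^s=O(1)^s$ against $e^{-ps(n-s+1)}\le n^{-s/2}$ finishes the job. Separately, your restriction to ``$p=O(\log n/n)$ in the relevant range'' in the small-$s$ regime is not justified: the theorem is claimed for all $p$ with $np-\log n\to\infty$, and since the target $O(ne^{-np})$ shrinks as $p$ grows, larger $p$ cannot be dismissed by monotonicity --- the same estimates must be (and can be) run there.
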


As the proof of both Theorem \ref{thm:main_graph} and Theorem \ref{thm:main_hypergraph} follows the same argument, we present it with the graph case in mind while pointing out the necessary changes for the hypergraph case, usually given in brackets.

\begin{proof}[Proof of Theorem \ref{thm:main_graph} and Theorem \ref{thm:main_hypergraph}]
    Suppose $p^k \ge C \log n^8 / n$ ($p \ge C \log^8 n / n$), for some sufficiently large $C > 0$, and let $q \in \mathbb R$ be such that $q^k \ge C' \log^8 n /n$ ($q \ge C' \log n / n$) and $p = 1 - (1 - q)^3$. We generate $G \sim \Gnp$ ($G \sim \Gknpp$) as the union $G_1 \cup G_2 \cup G_3$ where $G_1, G_2, G_3 \sim \mathcal{G}(n, q)$ ($G_1, G_2 \in \mathcal{G}^{(k+1)}(n, q)$). As each edge is present in at least one of the three graphs with probability exactly $1 - (1 - q)^3 = p$, we conclude $G_1 \cup G_2 \cup G_3 \sim \Gnp$. This is usually referred to as \emph{multiple exposure}.  
    
    First, from Lemma \ref{lemma:absorbing_path} we conclude that $G_1$ contains an $(\mathbf{a}, \mathbf{b}, X)$-absorber $A$ for some subset $X \subseteq [n]$ of size $|X| = n / 16 \log^2 n$ and disjoint $k$-tuples $\mathbf{a}, \mathbf{b} \in ([n] \setminus X)^k$. Next, we aim to cover all `unused' vertices $U := [n] \setminus V(A)$ with $O(n/\log^4 n)$ many vertex-disjoint $k$-paths ($k$-tight-paths). Let $t = \lceil \log^4 n \rceil$ and choose an arbitrary subset $U_X \subseteq X$ of size at most $t - 1$ such that $|U \cup U_X|$ is divisible by $t$. Consider an equitable partition of the set $U \cup U_X$ into $t$ parts $U_1, \ldots, U_t$. From $v(A) \le n/2$ we have that each $U_i$ is of size $|U_i| = s \in [n / (2 t), n/t]$. We iteratively construct a family of vertex-disjoint $k$-paths ($k$-tight-paths) $Q^j_1, \ldots, Q^j_s$ in $G_2$, for $1 \le j \le t$, such that $|V(Q^j_i) \cap U_{j'}| = 1$ for every $i \in [s]$ and $j' \in [j]$. For $j = t$, this gives a family of vertex-disjoint $k$-paths ($k$-tight-paths) $Q_1, \ldots, Q_s$ which cover all vertices in $U \cup U_X$. Moreover, in each step $j$ we only reveal edges of $G_2$ with one endpoint in $U_j$ and the other in $\bigcup_{j' < j} U_{j'}$. 
    
    For $j = 1$ this trivially holds since each vertex in $U_1$ forms a $k$-path of length 1. Note that in the hypergraph case same holds for every $j < k$, as every subset of $j < k$ vertices forms a $k$-tight-path. Let us assume that the assumption is true for some $j \ge 1$. To construct desired $k$-paths ($k$-tight-paths) for $j + 1$ we consider the auxiliary bipartite graph $B_{j+1}$ where one class corresponds to $U_{j+1}$ and the other class corresponds to $k$-paths ($k$-tight-paths) $\{Q_i^{j}\}_{i \in [s]}$ (that is, for each $Q_i^j$ we have one vertex which represents it), and
    $$
    E(B_{j+1}) = \{ \{Q_i^j, u\} \; : \; u \in U_{j+1} \text{ and } \{Q_i^j \cap U_{j'}, u\} \in G_2 \text{ for every } j' \in \{\max\{1, j - k + 1 \}, j\} \}.
    $$
    Recall that $Q_i^j \cap U_{j'}$ is a single vertex for every $j' \in [j]$. Moreover, as the edges edges of $G_2$ touching the set $U_{j+1}$ have not been exposed so far, we have $B_{j+1} \sim \mathcal{G}(s, s, q^{k'})$ where $k' = \min\{k, j\}$ (in the hypergraph case we have $B_{j+1} \sim \mathcal{G}(s, s, q)$). As $q^{k'} \ge C' \log^8 n / n \gg \log s / s$, Theorem \ref{thm:matching} tells us that with probability least $1 - 1/n$ (with room to spare) there exists a perfect matching $M_{j+1}$ in $B_{j+1}$. Consider one such perfect matching and let $u_i^{j+1} \in U_{i+1}$ be the vertex matched to the vertex corresponding to the path $Q_i^j$. From the definition of $B_{j+1}$ we conclude that extending $Q_i^j$ to $u_i^{j+1}$ for each $i \in [s]$ gives the desired family of $k$-paths ($k$-tight-paths) for $j + 1$. Finally, the probability that there exists a step $j \in [t]$ in which we were not able to find a perfect matching in $B_j$ is at most $t/n = o(1)$. As a perfect matching in each step can be found in polynomial time, this whole procedure requires polynomial time as well.
    
    Next, we merge $k$-paths $Q_1, \ldots, Q_s$ into one $k$-path ($k$-tight-path) from $\mathbf{a}$ to $\mathbf{b}$ by additionally using only vertices from $X \setminus U_X$. To this end, note that the set $W := X \setminus U_X$ is of size at least
    $$
    	|W| \ge \frac{n}{16 \log^2 n} - t \ge \frac{n}{17 \log^2 n}
    $$
    and, as $q^k \ge C' \log^8 n / n \ge C_{\ref{cor:connecting}} \log^6 n / |W|$ ($q \ge C_{\ref{cor:connecting_hyper}} \log^6 n / |W|$) we have that $G_3$ a.a.s satisfies the property of Corollary \ref{cor:connecting} (Corollary \ref{cor:connecting_hyper}) for $W$ (Graph and Hypergraph Connecting Lemma, respectively). For each $1 \le i \le s$, let $\mathbf{a}_i$ and $\mathbf{b}_i$ denote the $k$ left-most and right-most vertices of $Q_i$, that is,
    $$
    	\mathbf{a}_i = (Q_i \cap U_1, Q_i \cap U_2, \ldots, Q_i \cap U_k) \qquad \text{ and } \qquad \mathbf{b}_i = (Q_i \cap U_{\ell - k + 1}, \ldots, Q_i \cap U_{\ell}).
    $$
    (Recall that each $Q_i$ has exactly one vertex in each $U_j$). Consider the family of $2k$-tuples 
    $$
    	\{(\mathbf{b}, \mathbf{a}_1), (\mathbf{b}_1, \mathbf{a}_2), \ldots, (\mathbf{b}_{s - 1}, \mathbf{a}_s), (\mathbf{b}_s, \mathbf{a})\}
    $$ 
    and let $\{Z_i\}_{i \in [s+1]}$ be a family of vertex-disjoint $k$-connecting-paths ($k$-tight-paths) with all internal vertices being in $W = X \setminus U_X$, where
    \begin{itemize}
    	\item $Z_1$ is a $k$-connecting-path ($k$-tight-path) from $\mathbf{b}$ to $\mathbf{a}_1$, 
    	\item $Z_i$ is a $k$-connecting-path ($k$-tight-path) from $\mathbf{b}_{i-1}$ to $\mathbf{a}_i$, for $2 \le i \le s$, and
    	\item $Z_{s+1}$ is a $k$-connecting-path ($k$-tight-path) from $\mathbf{b}_{s}$ to $\mathbf{a}$.
	\end{itemize}
    Let $Z_X = \bigcup_{i \in [s+1]} V(Z_i) \cap W$ denotes the subset of vertices from $W$ which belong to these $k$-paths, and set $X' := U_X \cup Z_X$. Then
    $$
    	P' = \mathbf{b} \xrsquigarrow{$Z_1$} \mathbf{a}_1 \xrsquigarrow{$Q_1$} \mathbf{b}_1 
    	\xrsquigarrow{$Z_2$} \mathbf{a}_2 \xrsquigarrow{$Q_2$} \mathbf{b}_2 \xrsquigarrow{$Z_3$} \ldots \xrsquigarrow{$Z_{s}$} \mathbf{a_s} \xrsquigarrow{$Q_s$} \mathbf{b}_s 
    	\xrsquigarrow{$Z_{s+1}$} \mathbf{a}    	
    $$
    forms a $k$-path ($k$-tight-tight) from $\mathbf{b}$ to $\mathbf{a}$ which contains all the vertices in $G$ except $A \setminus (X' \cup \mathbf{a} \cup \mathbf{b})$. Finally, as $A$ is an $(\mathbf{a}, \mathbf{b}, X)$ absorber there exists a $k$-path ($k$-tight-path) $P_{X'} \subseteq A$ from $\mathbf{a}$ to $\mathbf{b}$ which contains all the vertices of $A$ except those in $X'$. Together with $P'$, such a path forms a Hamilton $k$-cycle ($k$-tight-cycle).
\end{proof}

\begin{remark}
    As we make use of a 3-round exposure of a random (hyper)graph, the algorithm obtained from the proof implicitly takes as an input three (hyper)graphs. However, as it is more conventional that the input graph is given at once, by deciding for each edge $e \in G$ and each $i \in \{1, 2, 3\}$ whether splitting its edges independently at random into three graphs we achieve the same result. 
\end{remark}

\section{Concluding remarks} \label{sec:remarks}

Our main contribution is an upper bound on the threshold for containing the square of a Hamilton cycle. In particular, we improve a result of K\"uhn and Osthus \cite{KuOs12} from $n^{-1/2 + \varepsilon}$ to $\log^4 n / \sqrt{n}$. It remains to determine the correct order of the threshold. For $k \ge 3$ it is known that the threshold is $n^{-1/k}$, while for $k = 1$ (i.e. a Hamilton cycle) the threshold is $\log n / n$. Since our goal was to give an argument that covers all powers of cycles, we believe that doing more precise calculations for the square could slightly improve our upper bound. However, reducing the logarithmic factor significantly or even removing it completely seems to require new ideas. Recently, Bennet, Dudek and Frieze \cite{bennett2016square} announced that $1/\sqrt{n}$ is a threshold in the case $k = 2$. However, their proof is also fully based on a second-moment (akin to Riordan's proof) thus does not have any algorithmic implications.

Another question is whether there exists a polynomial time algorithm for finding Hamilton $k$-cycles in random graphs and tight Hamilton cycles in random hypergraphs for values of $p$ which are close to $n^{-1/k}$ and $1/n$, respectively. Proofs given here provide a randomized quasi-polynomial time algorithm for both problems in case $p$ is a logarithmic factor away from these values, whereas for $p$ being a factor of $n^{\eps}$ away from the threshold polynomial algorithms are known \cite{allen2013tight,KuOs12}. It would be of interest to further improve our results and give polynomial algorithms for the same (or better) range of $p$. Finally, we remark that our algorithms only use randomization at the very beginning to split the given graph into three graphs, in order to mimic the multiple exposure used in the proof. It would be interesting to find a way to avoid this and obtain deterministic algorithms for considered problems.

\bibliographystyle{abbrv}
\bibliography{references}


\end{document}